\documentclass[a4paper]{amsart}

\usepackage{amsmath,amssymb}
\usepackage[dvipsnames]{xcolor}
\usepackage{pictexwd}
\usepackage{mathtools}
\usepackage{hyperref}

\newtheorem{theorem}{Theorem}[section]
\newtheorem{proposition}[theorem]{Proposition}
\newtheorem{corollary}[theorem]{Corollary}

\newtheorem{preremark}[theorem]{Remark}
\newtheorem{predefinition}[theorem]{Definition}
\newtheorem{preexample}[theorem]{Example}
\newtheorem{prenotation}[theorem]{Notation}
\newtheorem{preconjecture}[theorem]{Conjecture}

\newenvironment{remark}{\begin{preremark}\rm}{\end{preremark}}
\newenvironment{definition}{\begin{predefinition}\rm}
{\end{predefinition}}
\newenvironment{example}{\begin{preexample}\rm}{\end{preexample}}

\allowdisplaybreaks
\def\OO{{\mathcal{O}}}
\def\LL{{\mathcal{L}}}

\def\AA{\mathbb{A}}

\def\ZZ{\mathbb{Z}}
\def\QQ{\mathbb{Q}}

\def\PP{\mathbb{P}}
\def\TT{\mathbb{T}}

\newcommand{\M}{{\mathfrak{M}}}

\newcommand{\m}{{\mathfrak{m}}}

\let\epsilon=\varepsilon

\def\phi{{\varphi}}
\let\Psi=\varPsi
\let\Phi=\varPhi
\let\theta=\vartheta
\let\rho=\varrho

\def\Mat{\mathop{\rm Mat}\nolimits}

\def\Rad{\mathop{\rm Rad}\nolimits}
\def\Ann{\mathop{\rm Ann}\nolimits}

\def\Spec{\mathop{\rm Spec}\nolimits}
\def\Proj{\mathop{\rm Proj}\nolimits}
\def\Cot{\mathop{\rm Cot}\nolimits}

\def\rk{\mathop{\rm rk}\nolimits}

\def\Sing{\mathop{\rm Sing}\nolimits}
\def\Jac{\mathop{\rm Jac}\nolimits}

\def\ecod{\mathop{\rm ecod}\nolimits}

\newcommand{\Lin}{\mathop{\rm Lin}\nolimits}
\newcommand{\edim}{\mathop{\rm edim}\nolimits}

\newcommand{\BO}{\mathbb{B}_{\mathcal{O}}}

\newcommand{\id}{\mathop{\rm id}\nolimits}
\newcommand{\Hilb}{\mathop{\rm Hilb}\nolimits}

\let\To=\longrightarrow

\def\TTTo#1\mathop{\xrightarrow{\hspace*{1cm}}^{^{\mkern-70mu (#1}}}

\def\tr{^{\,\rm tr}}
\def\tfrac #1#2{{\textstyle\frac{#1}{#2}}}
\def\tsum_#1^#2{{\textstyle\sum\limits_{#1}^{#2}}}

\definecolor{darkred}{cmyk}{0.0, 0.7, 1.0, 0.0}

\def\cocoa{\mbox{\rm
  C\kern-.13em o\kern-.07 em C\kern-.13em o\kern-.15em A}}
\def\apcocoa{\mbox{\rm
A\kern-0.13em p\kern -0.07em C\kern-.13em o\kern-.07 em C\kern-.13em
o\kern-.15em A}}

\begin{document}

\title{Graded Algebras over Polynomial Rings}

%    Information for the first author
\author{Martin Kreuzer}
\address{Fakult\"at f\"ur Informatik und Mathematik, Universit\"at
Passau, D-94030 Passau, Germany}
\email{martin.kreuzer@uni-passau.de}

%    Information for the second author
\author{Lorenzo Robbiano}
\address{Dipartimento di Matematica, Universit\`a di Genova,
Via Dodecaneso 35,
I-16146 Genova, Italy}
\email{lorobbiano@gmail.com}

\date{\today}

\begin{abstract}
Given a trivially graded polynomial ring $A=K[a_1,\dots,a_m]$ over a field~$K$
and a positively graded polynomial ring $P=A[x_1,\dots,x_k]$, we study graded rings
$R=P/I$, where~$I$ is a homogeneous ideal in~$P$ such that $I\cap A = \{0\}$.
The corresponding morphism $\Theta:\; \Spec(R) \rightarrow \Spec(A) = \AA^m_K$ is used
to prove that $\Spec(R)$ is connected. Then we characterize and compute the following loci
in~$\AA^m_K$: the set $\Sing_0(\Theta)$ of all points such that the corresponding
point in the zero section of~$\Theta$ is singular in $\Spec(R)$, the set $\Sing_v(\Theta)$
of all points~$\Gamma$ such that the origin of the fiber $F_\Gamma$ of~$\Theta$ is singular, and the set
$\Sing_s(\Theta)$ of all points~$\Gamma$ such that $\dim(\Sing(F_\Gamma)) {\ge} 1$.
These results are then used to study MaxDeg border basis schemes, as their coordinate rings
are non-negatively graded by the total arrow degree and they have the required structure.
In particular, we explicitly determine the singular loci for the $\OO$-border basis schemes
with $\OO=\{1,x,y,z,z^2\}$ and $\OO = \{1,x,y,z,yz\}$.
\end{abstract}

\keywords{non-negatively graded ring, zero section, singularity, embedding codimension, border basis scheme}

\subjclass[2010]{Primary 13A02; Secondary  14E25, 14Q20, 13P10}

% 13A02 Graded Rings
% 13E10 finite dimensional algebras
% 13P10 Gröbner bases; other bases for ideals and modules

% 14D20 Algebraic moduli problems
% 14E25 Embeddings in algebraic geometry
% 14Q20 Computational Aspects of Alg Geom
% 14R10 Affine Spaces

\maketitle

%\tableofcontents

\bigbreak
%%%%%%%%%%%%%%%%%%%%%%%%%%%%%%%%%%%%%%
%
% Section 1: Introduction
%
%%%%%%%%%%%%%%%%%%%%%%%%%%%%%%%%%%%%%%

\section{Introduction}
\label{sec-Intro}

In a series of papers \cite{KLR0,KLR1,KLR2,KLR3,KR5,KR6}, the authors and L.N.\ Long developed
computational techniques for analyzing border basis schemes. A particularly suitable type
for this endeavour are MaxDeg border basis schemes, since their coordinate rings are non-negatively
graded with respect to the total arrow degree. For instance, in~\cite{KR6}, we succeeded in
finding optimal reembeddings of such border basis schemes into lower dimensional affine spaces
and in determining cases when these schemes are isomorphic to an affine space.

Here we continue this effort, but we start from a more general point of view. More precisely, we study
non-negatively graded algebras of the following form. Let $K$ be a field, let $A=K[a_1,\dots, a_m]$ be a trivially graded
polynomial ring over~$K$, let $P=A[x_1,\dots,x_m]$ be a positively graded polynomial ring over~$A$, and let
$I\subseteq P$ be a homogeneous ideal with $I \cap A = \{0\}$. In this case we say that~$R$
is a {\it positive $A$-algebra}. The coordinate ring of a MaxDeg border basis scheme is exactly of this kind.

Geometrically, a positive $A$-algebra $R$ corresponds to a morphism of schemes $\Theta:\; \Spec(R) \longrightarrow
\AA^m_K$ which has a zero section and whose fibers are weighted projective schemes.
In Section~\ref{sec-Connectedness} we prove that the affine scheme $\Spec(R)$
associated to a positive $A$-algebra is connected (see Prop.~\ref{prop-Rconnected}).
In particular, this implies that MaxDeg border basis schemes are connected.
It is a classical result of R.\ Hartshorne that the Hilbert schemes $\Hilb^\mu(\AA^n_K)$ are connected (cf.~\cite{Har,PS}).
Although border basis schemes can be embedded as open subschemes of such Hilbert schemes, it is not known
in general whether border basis schemes are always connected (cf.~\cite{Rob}).

In~\cite[Thm.~5.9]{KR5} we showed that, if the point $\Gamma_0$ in the zero section
of~$\Theta$ which lies above $\Gamma\in K^m$ is regular, then also the vertex of the 
fiber~$F_\Gamma$ of~$\Theta$ is a regular point in the fiber, and this implies that the
fiber is isomorphic to an affine space.

Thus we are led to define the following
three sets of points in Section~\ref{sec-Singularities}:
\begin{align*}
&\Sing_0(\Theta) = \{ \Gamma\in K^m \mid \Gamma_0 \in \Sing(R) \}  
&& \hbox{\rm (set of zero section singularities)} \\
&\Sing_v(\Theta) = \{ \Gamma \in K^m \mid (0,\dots,0) \in \Sing(F_\Gamma) \} 
&& \hbox{\rm (set of fiber vertex singularities)} \\
&\Sing_s(\Theta) = \{ \Gamma \in K^m \mid \{ (0,\dots,0) \} \subsetneq \Sing(F_\Gamma) \}
&& \hbox{\rm (set of singular fibers)}
\end{align*}
The fundamental inequalities shown in Proposition~\ref{prop-fundamental} allow
us to connect the embedding codimensions of the local rings of a point on the zero section
in $\Spec(R)$ and in the fiber $F_\Gamma$. They imply $\Sing_v(\Theta) \subseteq
\Sing_0(\Theta)$ (cf.\ Corollary~\ref{cor-singsing}) and characterizations of the regularity 
of~$R_{\Gamma_0}$ and $R_{F_\Gamma}$ (cf.\ Corollaries~\ref{cor-regR} and~\ref{cor-regF}).

In order to compute the above loci, we introduce additional ingredients
in Section~\ref{sec-AlinCoeffMat}. The $A$-linear part $\Lin_A(f)$ of a polynomial
$f\in P=A[x_1,\dots,x_k]$ is used to define the $A$-linear part of~$I$
via $\Lin_A(I) = \langle \Lin_A(f) \mid f\in I\rangle_A$. We show that one can compute
this $A$-module from any set of generators of~$I$ (see Prop.~\ref{prop-indepOfGen}).
Then we encode $\Lin_A(I)$ using the $A$-linear coefficient matrix~$\LL_A(G)$.
Its entries are the coefficients from~$A$ of the indeterminates $x_1,\dots,x_k$
in a system of generators~$G$ of~$I$. If we substitute a point $\Gamma\in K^m$
into~$\LL_A(G)$, we obtain a relation matrix $\LL_K(G_\Gamma)$
of the conormal module of the fiber $F_\Gamma$ at its origin.

The $A$-linear coefficient matrix allows us in Section~\ref{sec-CompSing} to characterize the
set of zero section singularities $\Sing_0(\Theta)$ using the ranks of the
matrices $\LL_K(G_\Gamma)$ (see Thm.~\ref{thm-CharSing0}). As a consequence, we also
get an algorithm for computing $\Sing_0(\Theta)$ (see Corollary~\ref{cor-CompSing0}).
Moreover, we characterize the set of vertex singularities $\Sing_v(\Theta)$
with the help of the ranks of the matrices $\LL_K(G_\Gamma)$. Since this characterization
involves the dimension of the fibers, the sets $\Sing_v(\Theta)$ are constructible
in~$\AA^m_K$, but in general not closed. Nevertheless, using the computation of comprehensive
Gr\"obner bases, we construct algorithms for computing $\Sing_v(\Theta)$ and $\Sing_s(\Theta)$
(see Corollaries~\ref{cor-CompSingV} and~\ref{cor-CompSingS}). Except for very simple cases,
these computations appear to be out of reach for current computer algebra systems, though.

In the final section we return to the setting which originally sparked our interest
in positive $A$-algebras, namely the study of MaxDeg border basis schemes. For planar
MaxDeg border basis schemes defined over a perfect field~$K$, i.e., for the case of a border 
basis scheme $\BO$ with an order ideal~$\OO$ in two indeterminates, we showed in~\cite[Cor.~3.6]{KR6}
that~$\BO$ is an affine cell, and thus non-singular. The next case is the border basis scheme
for $\OO = \{1,x,y,z\}$ in which the coordinate ring is positively graded and
$\Sing_0(\Theta) = \Sing_v(\Theta) = \Sing_s(\Theta) = \{0\}$.

Significantly more interesting are the two types of spacial order ideals of length~5.
For $\OO = \{1,x,y,z,z^2\}$, the morphism $\Theta:\; \BO \longrightarrow \AA^5_K$
can be simplified via a suitable $Z$-separating reembedding such that the image of~$\BO$
is contained in $\AA^{21}_K$. In Example~\ref{ex-13z2} we show that the corresponding 
$A$-linear coefficient matrix is a block diagonal matrix with blocks of sizes $3\times 13$ and
$9\times 3$. Its ideal of minors which define $\Sing_0(\Theta)$
turns out to have the simple radical $\mathfrak{p} = \langle c_{53} - c_{51}^2,\,
c_{54} - c_{51}c_{52},\, c_{55}-c_{52}^2 \rangle$ which shows that $\Sing_0(\Theta)$
is isomorphic to an affine plane. Additionally, we verify that $\Sing_v(\Theta)$ 
and $\Sing_s(\Theta)$ are the same set. Notice that the monomial point of this border basis
scheme is singular.

The second type of order ideal is represented by $\OO = \{ 1,x,y,z,yz\}$ and examined
is Example~\ref{ex-13yz}. Here the monomial point of~$\BO$ is regular, but otherwise the
computation proceeds in a very similar fashion as in the preceding example. All three
sets of singular points turn out to be defined by the prime ideal
$\mathfrak{p} = \langle c_{52} - c_{51}c_{54},\, c_{55} - c_{51} c_{54}^2,\,
c_{51} c_{53} - 1 \rangle$. A final observation is that in both those cases the generic
fiber of~$\Theta$ is 10-dimensional, while the singular fibers are 11-dimensional.

\medskip
Throughout this paper we follow the notation and terminology introduced
in~\cite{KR1,KR2}. As far as specific notation for border bases and border basis schemes is
concerned, we stay coherent with our previous works \cite{KLR0,KLR1,KLR2,KLR3,KR3,KR4,KR5,KR6}.
In regard to algebraic geometry, we allow ourselves the following shortcuts.
When we write $\Spec(R)$, we actually mean the scheme $(\Spec(R), \mathcal{O}_{\Spec(R)})$.
When we talk about closed points of a scheme defined over a field~$K$, we assume that they
are $K$-rational. In principle, we should consider points with coordinates in the algebraic
closure of~$K$, but in the cases studied here, we believe that this slight inaccuracy causes no problems.

All results are grounded in extensive calculations of explicit examples using the
computer algebra system \cocoa\ (cf.~\cite{ABR}). The source files for the examples are available 
from the authors upon request.

\bigskip\bigbreak
%%%%%%%%%%%%%%%%%%%%%%%%%%%%%%%%%%%%%%%%%%%%%%%%
%
% Section 2: Definitions and Basic Properties 
%
%%%%%%%%%%%%%%%%%%%%%%%%%%%%%%%%%%%%%%%%%%%%%%%%

\section{Positive A-Algebras}
\label{sec-PosAlg}

As explained in the introduction, we let $K$ be a field and $A=K[a_1,\dots, a_m]$
a polynomial ring over~$K$ which we consider as trivially graded.
Next, we let $P=A[x_1,\dots,x_k]$ be a polynomial ring
over~$A$ and equip it with a $\ZZ$-grading defined
by a row of weights $W=(w_1,\dots, w_k) \in \ZZ^k$.
The set $\{x_1,\dots,x_k\}$ is denoted by~$X$.
Finally, we let $n = m+k$, so that $P=A[X]$
is a polynomial ring in~$n$ indeterminates.

Our main objective is to study the following type of algebra.

\begin{definition}\label{del-posalg}
Let $A$ and $P$ be as described above.
A graded $A$-algebra is a ring of the form $R=P/I$, where $I$ is a $W$-homogeneous 
ideal in~$P$.
If  $w_i>0$ for $i=1,\dots,k$ and $I \cap A = \{0\}$, we say that~$R$ 
is a \textbf{positive $A$-algebra}.
In other words, we require that the ring $R$ is a positively graded $A$-algebra 
and $R_0=A$.
\end{definition}

The following remark collects some basic properties of positive $A$-algebras.

\begin{remark}\label{rem-special}
Assume that $R=P/I$ is a positive $A$-algebra.
\begin{enumerate}
\item[(a)] In geometric terms, letting $\AA^n_K = \Spec(P)$,
we have a closed subscheme $\Spec(R) \subseteq \AA^n_K$.

\item[(b)] The ideal~$I$ is contained in $\langle X \rangle = \langle x_1, \dots, x_k\rangle$,
and hence the canonical $K$-algebra homomorphism $\theta:\; A \longrightarrow R$ is injective.

\item[(c)] Letting $\AA^m_K = \Spec(A)$, the map~$\theta$ corresponds to a morphism 
of affine schemes $\Theta:\; \Spec(R) \longrightarrow \AA^m_K$ which is 
(at the level of $K$-rational points) given by the projection to the first~$m$ coordinates
and surjective (see~\cite[Rem.~5.2]{KR5}). Here $\Theta$ is called the {\bf canonical morphism}
and the affine space $\AA^m_K$ is called the \textbf{base space} of~$\Theta$.

\end{enumerate}
\end{remark}

One of our topics is to study the fibers of the morphism~$\Theta$.
To this end we extend and slightly modify the notation introduced in~\cite[Sect.~5]{KR5} as follows.

\begin{definition}\label{def-fiber}
Let $R=P/I$ be a positive $A$-algebra, let $\Theta:\; \Spec(R) \longrightarrow \AA^m_K$ 
be the canonical morphism, and let $\Gamma = (\gamma_1,\dots,\gamma_m)$ be a point 
in~$K^m$.
\begin{enumerate}
\item[(a)] The point $\Gamma_0 = (\gamma_1, \dots, \gamma_m, 0,\dots,0) \in K^n$
is called the \textbf{zero point} over the~\textbf{base point} $\Gamma$.

\item[(b)] The maximal ideal in~$P$ corresponding to the zero point over~$\Gamma$ is denoted 
by~$\M_{\Gamma_0}$. 

\item[(c)] The local ring $R_{\M_{\Gamma_0}/I} = (P/I)_{\M_{\Gamma_0}/I}$ of the zero point over~$\Gamma$ is
denoted by~$R_{\Gamma_0}$. Its maximal ideal $(\M_{\Gamma_0}/I)\cdot R_{\M_{\Gamma_0}/I}$ 
is denoted by~$\m_{\Gamma_0}$ and its dimension by~$d_{\Gamma_0}$.

\item[(d)] The closed subscheme $F_\Gamma = \Theta^{-1}(\Gamma)$ of~$\Spec(R)$ 
is called the \textbf{fiber} of~$\Theta$ over~$\Gamma$.

\item[(e)] The ideal $I_\Gamma = I  \cdot (P/\langle a_1-\gamma_1, \dots, a_m - \gamma_m\rangle) 
\cong  I\vert_{a_1\mapsto \gamma_1,\dots, a_m \mapsto \gamma_m} \cdot K[X]$
is called the \textbf{fiber ideal} of~$\Theta$ over~$\Gamma$. 
The ring $K[X]/I_\Gamma$ is the coordinate ring of the fiber~$F_\Gamma$.
Its maximal ideal at the origin is $\langle X \rangle / I_\Gamma$.
Notice that we may view $F_\Gamma$ as a closed subscheme of $\AA^k_K$ in this way.

\item[(f)] The local ring $(K[X]/I_\Gamma)_{\langle X \rangle / I_\Gamma}$ of the 
fiber~$F_\Gamma$ at its origin is denoted by~$R_{F_\Gamma}$. Its maximal ideal
is denoted by~$\m_{F_\Gamma}$ and its dimension by~$d_{F_\Gamma}$.

\end{enumerate}
\end{definition}

Intuitively, the morphism $\Theta$ can be seen as a \textit{family of fibers} over $\AA^m_K$.
Moreover, the zero points of the fibers can be combined as follows.

\begin{definition}\label{def-ZeroSection}
The morphism $\zeta:\; \AA^m_K \longrightarrow \Spec(R)$ which is induced by the 
canonical $K$-algebra epimorphism 
$R = P/I \longrightarrow P/\langle x_1,\dots,x_k \rangle \cong A$
is called the {\bf zero section morphism} of~$\Theta$. Its image 
is called the {\bf zero section} of~$\Theta$ and is denoted by~$Z_R$.
\end{definition}

\begin{remark}\label{rem-Section}
Notice that the zero section morphism~$\zeta$ of~$\Theta$ is indeed a section, i.e., we have 
$\Theta \circ\zeta = \id_{\AA^m_K}$. In particular, the morphism $\zeta$ induces an
isomorphism between $\AA^m_K$ and the zero section $Z_R$. 
Furthermore, we have $\zeta(\Gamma)=\Gamma_0$ for every point~$\Gamma$ in the base space.
\end{remark}

A natural question is whether~$\zeta$ is the only section of~$\Theta$.
In order to have a section $\sigma:\; R \longrightarrow P/ \langle x_1 - \alpha_1,
\dots, x_k -\alpha_k\rangle \cong A$ for some $(\alpha_1,\dots, \alpha_k) \in K^k \setminus \{0\}$,
it is necessary that $I \subseteq \langle x_1 - \alpha_1, \dots, x_k - \alpha_k\rangle$.
At first glance, this may seem unlikely for a $W$-homogeneous ideal~$I$.
However, the following example exhibits a non-trivial case.

\begin{example}\label{ex-moresections}
Let $A = \QQ[a,b]$, and let $P = A[x_1,x_2,x_3]$ be graded by $W=(1,1,1)$.
Then the ideal $I = \langle ax_1 -bx_2 +(b-a) x_3 \rangle$ is $W$-homogeneous, and
$R=P/I$ is a positive $A$-algebra. 

Now we consider the ideal $J = \langle x_1-1,\ x_2-1, x_3-1\rangle$ and note that $I \subseteq J$.
Hence we get an epimorphism  $\sigma:\; R = P/I \longrightarrow P/ J \cong A$, 
and this yields a section $\Spec(\sigma):\; \AA^m_K \longrightarrow \Spec(R)$ of~$\Theta$.
\end{example}

\bigbreak
%%%%%%%%%%%%%%%%%%%%%%%%%%%%%%%%%%%%%%%%%%%%%%%%
%
% Section 3: Connectedness
%
%%%%%%%%%%%%%%%%%%%%%%%%%%%%%%%%%%%%%%%%%%%%%%%%

\section{Connectedness}
\label{sec-Connectedness}

As before, we let $A=K[a_1,\dots, a_m]$ be trivially graded, we let $P=A[x_1,\dots,x_k]$
be positively graded by~$W$, and we let $R=P/I$ be a positive $A$-algebra.
When we choose a point $\Gamma = (\gamma_1,\dots,\gamma_m) \in K^m$,
the coordinate ring $K[X]/I_\Gamma$ of the fiber~$F_\Gamma$ is positively graded
by~$W$. Therefore the fiber~$F_\Gamma$ can be viewed as a closed subscheme 
of a weighted projective space.

Let us recall a few basic facts about weighted projective spaces. (For more information, see 
for instance~\cite{BR}). Given a tuple of positive integers $W=(w_1,\dots, w_k)$
which defines a grading on $K[x_1,\dots,x_k]$ via $\deg_W(x_i)=w_i$, the scheme
$\PP_K(W) = \Proj(K[x_1,\dots,x_k])$ is called the \textbf{$W$-weighted projective space}.

In the case $W=(1,\dots,1)$, the space $\PP_K(W)$ equals the ordinary projective space $\PP_K^{k-1}$.
Recall that $\PP_K^{k-1}$ can be obtained as the quotient of the  punctured affine space 
$\AA_K^k \setminus \{0\}$  by the punctured straight lines passing through the (omitted) origin, 
i.e.\ the lines $L_\pi$ for $\pi = (\pi_1,\dots, \pi_k) \in K^k\setminus \{0\}$
defined parametrically by $x_1= \pi_1 t, \dots, x_k = \pi_k t$  with $t\in K \setminus \{0\}$.

Similarly, the  space $\PP_K(W)$ is obtained as the quotient of $\AA_K^k \setminus \{0\}$ 
by the punctured rational curves $C_\pi$ for $\pi = (\pi_1,\dots, \pi_k) \in K^k \setminus \{0\}$
defined parametrically by $x_1 = \pi_1 t^{w_1}, \dots,  x_k = \pi_k t^{w_k}$  
with $t\in K\setminus \{0\}$. 

Using these curves, we now show that $\Spec(R)$ is connected. This fact was mentioned, 
but not proved in~\cite{KR5}.

\begin{proposition}{\bf (Connectedness of 
\texorpdfstring{$\Spec(R)$}{Spec(R)})}\label{prop-Rconnected}$\mathstrut$\\
Let  $A=K[a_1,\dots, a_m]$ be trivially graded, 
let $P=A[x_1,\dots,x_k]$ be positively graded by $W=(w_1,\dots, w_k)$, 
and let $R=P/I$ be a positive $A$-algebra. 
\begin{enumerate}
\item[(a)] Let $\Gamma=(\gamma_1,\dots,\gamma_m)\in K^m$ be a base point
and $\Gamma_0 = (\gamma_1,\dots,\gamma_m, 0, \dots, 0)$ the zero point over~$\Gamma$. Assume that
$\Pi=(\gamma_0,\dots,\gamma_m, \pi_1,\dots,\pi_k)$ is a $K$-rational point on the fiber~$F_\Gamma$.
Furthermore, let~$t$ be a new indeterminate, and let $\psi:\; K[X] \longrightarrow K[t]$ be the
$K$-algebra homomorphism defined by $\psi(x_i) = \gamma_i\, t^{w_i}$ for $i=1, \dots, k$.
Then the kernel of~$\psi$ defines an irreducible curve $C_{\Pi,\Gamma_0} \subseteq \AA^k_K$
which is contained in~$F_\Gamma$ and which contains both points~$\Pi$ and~$\Gamma_0$.

\item[(b)] The fiber $F_\Gamma$ of~$\Theta$ is connected.

\item[(c)] The scheme $\Spec(R)$ is connected.

\end{enumerate}
\end{proposition}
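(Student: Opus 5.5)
Note first that the statement has typos: $\psi$ should send $x_i\mapsto \pi_i\,t^{w_i}$ rather than $\gamma_i t^{w_i}$, and $\Pi$ should read $(\gamma_1,\dots,\gamma_m,\pi_1,\dots,\pi_k)$. I will prove the corrected version, viewing $C_{\Pi,\Gamma_0}$ inside the fiber $F_\Gamma\subseteq\AA^k_K$, so that $\Gamma_0$ corresponds to the origin of $\AA^k_K$ and $\Pi$ to $(\pi_1,\dots,\pi_k)$.

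For (a), the kernel of $\psi$ is a prime ideal, since $K[X]/\ker\psi$ embeds in the domain $K[t]$. If $\Pi\neq\Gamma_0$, some $\pi_i\neq0$, so the image $K[\pi_1t^{w_1},\dots,\pi_kt^{w_k}]$ is not $K$. It is a subring of $K[t]$ over which $t$ is integral, so it has transcendence degree~$1$, and hence $C_{\Pi,\Gamma_0}$ is an irreducible curve. (If $\Pi=\Gamma_0$ the claim degenerates to a point, which is harmless for (b).) To see that $C_{\Pi,\Gamma_0}\subseteq F_\Gamma$, I need $I_\Gamma\subseteq\ker\psi$. The ideal $I_\Gamma$ is $W$-homogeneous, since substituting $a_j\mapsto\gamma_j$ preserves $W$-homogeneity because $A$ is trivially graded. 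So it suffices to take a $W$-homogeneous $f\in I_\Gamma$ of degree~$d$. Then $\psi(f)=f(\pi_1t^{w_1},\dots,\pi_kt^{w_k})=t^d f(\pi_1,\dots,\pi_k)=0$, because $\Pi\in F_\Gamma$. The two points lie on the curve via the specialisations $t\mapsto1$, which gives $\Pi$, and $t\mapsto0$, which gives the origin, i.e.\ $\Gamma_0$: every $g\in\ker\psi$ satisfies $g(\Pi)=\psi(g)|_{t=1}=0$ and $g(0)=\psi(g)|_{t=0}=0$.

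For (b), every $K$-rational point of $F_\Gamma$ lies on an irreducible, hence connected, subset $C_{\Pi,\Gamma_0}$ of $F_\Gamma$ that contains the origin. Thus all these points lie in the connected component of $\Gamma_0$. To get connectedness of the scheme rather than just of its $K$-rational points, I would argue algebraically. A disconnection corresponds to a nontrivial idempotent $e$ of $K[X]/I_\Gamma$. Since this ring is non-negatively graded with degree-$0$ part $K$, any idempotent is homogeneous of degree~$0$. Indeed, if $e$ had a lowest nonzero positive-degree component $e_s$, comparing degree-$s$ parts of $e^2=e$ gives $e_s=2e_0e_s$ with $e_0\in\{0,1\}$, forcing $e_s=0$. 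Hence $e\in K$, so $e\in\{0,1\}$. This avoids the rationality issue that the paper's convention glosses over. I would present the curve argument as the geometric picture and the idempotent argument as the rigorous backbone.

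For (c), the same graded idempotent argument applies to $R$ itself. Here $R$ is non-negatively graded with $R_0=A$, and a lowest-degree induction shows that any idempotent of $R$ lies in $R_0=A$. Since $A$ is a domain, its only idempotents are $0$ and $1$, so $\Spec(R)$ is connected. Geometrically, every point is joined within its fiber to the zero section $Z_R\cong\AA^m_K$, which is connected, and this is the argument the paper suggests. The main obstacle is making the geometric "union of connected sets meeting a connected set" argument rigorous at the scheme level, including non-rational points, so I would rely on the idempotent argument there.
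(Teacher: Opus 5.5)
Your proposal is correct. For part~(a) it follows the paper's proof: $\ker\psi$ is prime, a $W$-homogeneous $f\in I_\Gamma$ of degree $d$ satisfies $\psi(f)=t^d f(\pi_1,\dots,\pi_k)=0$, and specializing $t\mapsto 1$ and $t\mapsto 0$ puts $\Pi$ and $\Gamma_0$ on the curve. You also rightly repair the typos ($\psi(x_i)=\pi_i t^{w_i}$, and $t^d$ where the paper writes $d\cdot$). You correctly note that for $\Pi=\Gamma_0$ one gets $\ker\psi=\langle X\rangle$, so the ``curve'' degenerates to a point.

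For (b) and (c), however, your rigorous backbone takes a genuinely different route from the paper. The paper argues geometrically. It extends $K$ to an algebraically closed field and joins each closed point of $F_\Gamma$ to the vertex by the curve from~(a). From this it deduces that every irreducible component of $F_\Gamma$ passes through $\Gamma_0$. For $\Spec(R)$ it then links two fibers through $\zeta$ applied to a line in the base space.

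You instead use the lemma that an idempotent $e$ of a $\ZZ_{\ge 0}$-graded ring lies in degree~$0$. This follows from $e_0^2=e_0$ and $(1-2e_0)e_s=0$ for the lowest positive-degree component $e_s$, since $1-2e_0$ is a unit. The lemma gives (b) from $(K[X]/I_\Gamma)_0=K$, which uses $I\subseteq\langle X\rangle$. It gives (c) from $R_0=A$ being a domain. Each takes a few lines and works over any field. No base change is needed, no passage from closed points to the whole scheme, and no use of (a) or of the zero section. It also sidesteps a slightly loose step in the paper: to conclude that every component contains $\Gamma_0$, one must pick a closed point lying on only that component.

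What the paper's route offers in exchange is explicit geometry. It gives a concrete chain of rational curves joining any two points through the zero section. It also shows that every component of a fiber passes through the vertex, which is what the example after the proposition illustrates.
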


\begin{proof}
First we prove~(a).
Since the $K$-algebra homomorphism $\psi$ is homogeneous with respect to the grading
given by~$W$ on~$K[X]$ and the standard grading on~$K[t]$, and since the kernel of~$\psi$
is a prime ideal, the scheme $C_{\Pi,\Gamma_0}$ is an irreducible curve in~$\AA^k_K$.
Furthermore, from the preimages of $\langle t-1\rangle$ 
and $\langle t\rangle$ under~$\psi$ we see that~$\Pi$ and~$\Gamma_0$ are contained in $C_{\Pi,\Gamma_0}$.

It remains to show that $C_{\Pi,\Gamma_0}$ is contained in~$F_\Gamma$.
Let $f \in I_\Gamma$ be a $W$-homogeneous polynomial of $W$-degree $d$.
The division of~$f$ by the ideal $\langle x_1 - \pi_1 t^{w_1}, \dots, x_k - \pi_k t^k \rangle$
yields  $f(\pi_1 t^{w_1}, \dots \pi_k t^k)$ as a remainder. 
Thus the desired result follows from 
$f(\pi_1 t^{w_1}, \dots \pi_k t^k) = d\cdot f(\pi_1,\dots,\pi_k) =0$.

To show~(b), we note that we may enlarge to base field to prove connectedness.
Therefore we may assume that~$K$ is algebraically closed. 
Let $\Pi,\Pi'$ be two closed points in $F_\Gamma$.
The irreducible curve which connects~$\Pi$ and~$\Gamma_0$
is contained in an irreducible component of~$F_\Gamma$. For the point $\Pi'$,
the corresponding curve also connects it to~$\Gamma_0$. Hence all irreducible components of~$F_\Gamma$
contain~$\Gamma_0$, and thus~$F_\Gamma$ is connected.

Finally, we prove (c). To show connectedness, we may assume that $K$ is algebraically closed and show
that any two closed points $\Pi_1,\Pi_2$ of $\Spec(R)$ are in the same connected component.
By~(a), $\Pi_1$ is in the same connected component as the zero point $\Gamma_0^{(1)}$
in its fiber with respect to~$\Theta$. Similarly, $\Pi_2$ is in the same connected component
at the zero point $\Gamma_0^{(2)}$ in its fiber. 

Now we apply the morphism~$\Theta$ to map $\Gamma_0^{(1)}$ and $\Gamma_0^{(2)}$ to
points $\Gamma^{(1)}$ and $\Gamma^{(2)}$, respectively, in~$\AA^m_K$.
Next we connect $\Gamma^{(1)}$ and $\Gamma^{(2)}$ with a line. Finally,
the image under~$\zeta$ of this line shows that~$\Gamma_0^{(1)}$ and~$\Gamma_0^{(2)}$
are in the same connected component of~$\Spec(R)$, and the proof is complete.
\end{proof}

The following example illustrates this proposition.

\begin{example}\label{ex-ConnecingFibers}
Let $A=\QQ[a_1,a_2]$, and let $P = A[x_1,\dots, x_{12}]$ 
be non-negatively graded by $W = (2,2,3,3, 1,1,2,2, 1,1,2,2)$. Then the ideal
$I=\langle f_1, \dots, f_6 \rangle$ generated by the polynomials 
$$
\begin{array}{lll}
f_1 = x_3  -a_1x_4  -x_2x_9 +x_1x_5, & f_2 =  x_7 -a_1x_8  -x_6x_{10}, \cr
f_3 = x_{12}  -a_2x_{11} -x_6x_9 ,    & f_4= x_2x_6 -x_2x_9 +x_1x_5, \cr
f_5 = x_8 -x_2 -a_2x_7  -x_5x_6,     & f_6 = x_{11} -x_2 -a_1x_{12}  -x_9x_{10} -x_9^2
\end{array}
$$
is $W$-homogeneous and satisfies $I \cap A = \{0\}$. Therefore $R=P/I$ is a positive $A$-algebra.
The two points
\begin{align*}
\Pi_1 &\;=\; (2,\; 1,\;  \tfrac{1}{2},\;\;\,3,\;  5,\;  1,\; -6,\;  1,\;  3,\;  0,\;  0,\; 3,\; -3,\; -3)\cr
\Pi_2 &\;=\; (1,\; 4,\;   7,\;           -9,\;  2,\;  2,\;\;\;0,\;  0,\;  3,\;  3,\;  0,\; 0,\;\;\;3,\;\;12)
\end{align*}
belong to $\Spec(R)$. Let us connect them by a sequence of irreducible curves.

{\it Step 1:}\/ The point~$\Pi_1$ is in the same fiber of~$\Theta$ as $\Gamma_0^{(1)} = 
(2,\, 1,\, 0,\dots,\, 0)$. The ideal $I_{\Gamma^{(1)}}$ is generated by the
polynomials $\tilde{f}_1,\dots,\tilde{f_6} \in \QQ[x_1,\dots,x_{12}]$,
where $\tilde{f}_i$ is obtained from~$f_i$ by letting $a_1 \mapsto 2$ and $a_2 \mapsto 1$.

To construct an irreducible curve connecting~$\Pi_1$ and~$\Gamma_0^{(1)}$ we start with
the $\QQ$-algebra homomorphism $\Psi:\; \QQ[X] \longrightarrow \QQ[t]$ such that 
$\Psi(x_1) = \tfrac{1}{2}\, t^2$, $\Psi(x_2) = 3t^2$, $\Psi(x_3)= 5 t^3$, $\Psi(x_4) = t^3$,
$\Psi(x_5) = -6t$, $\Psi(x_6)=t$, $\Psi(x_7)=3t^2$, $\Psi(x_8)=\Psi(x_9)=0$,
$\Psi(x_{10})=3t$, and $\Psi(x_{11})= \Psi(x_{12}) = -3t^2$. 
Notice that~$\Psi$ is homogeneous of degree zero and surjective, as $t=\Psi(x_6)$.

Moreover, for $i=1,\dots,6$, we have $\Psi(\tilde{f}_i) = 0$. Therefore~$\Psi$ induces a
homogeneous $\QQ$-algebra homomorphism $\overline{\Psi}:\; \QQ[X]/I_{\Gamma_1} \longrightarrow \QQ[t]$.
Thus we get a morphism $\psi=\Spec(\overline{\Psi}):\; \AA^1_\QQ \longrightarrow F_{\Gamma_1}$ 
whose image is an irreducible
curve which contains $\Pi_1$, defined by $\overline{\Psi}^{-1}(\langle t-1\rangle)$, and~$\Gamma_0^{(1)}$,
defined by $\overline{\Psi}^{-1}(\langle t\rangle)$.

{\it Step 2:}\/ Let $\Gamma_0^{(2)} = (4,\, 1,\, 0, \dots,\, 0)$ be the zero point in the
fiber of~$\Pi_2$. To connect~$\Gamma_0^{(1)}$ and $\Gamma_0^{(2)}$, we use the
line connecting $\Gamma^{(1)} = (2,1)$ and $\Gamma^{(2)} = (1,4)$ in $\AA^1_\QQ$
and embed it into $\Spec(R)$, i.e., we use the morphism $\lambda:\; \AA^1_\QQ \longrightarrow \Spec(R)$
given by $\lambda(t) = (2-t,\, 1+3t,\, 0,\dots,\, 0)$.

{\it Step 3:}\/ It remains to connect $\Gamma_0^{(2)}$ with $\Pi_2$ by an irreducible
curve in~$F_{\Gamma^{(2)}}$. We proceed as in Step~1, except that this time we use the 
$\QQ$-algebra homomorphism $\Psi':\; \QQ[X] \longrightarrow \QQ[t]$ such that 
$\Psi'(x_1) = 7 t^2$, $\Psi'(x_2) = -9 t^2$, $\Psi'(x_3)= \Psi'(x_4) = 2 t^3$,
$\Psi'(x_5) = \Psi'(x_6)= 0$, $\Psi'(x_7) = \Psi'(x_8) = 3t^2$, $\Psi'(x_9)=\Psi'(x_{10})=0$,
$\Psi'(x_{11}) = 3t^2$, and $\Psi'(x_{12}) = 12 t^2$. 

Altogether, the sequence of the three irreducible curves connects $\Pi_1$ to~$\Pi_2$.
\end{example}

\bigbreak
%%%%%%%%%%%%%%%%%%%%%%%%%%%%%%%%%%%%%%%%%%%%%%%%%%%%%%%
%
% Section 4: Singularities
%
%%%%%%%%%%%%%%%%%%%%%%%%%%%%%%%%%%%%%%%%%%%%%%%%%%%%%%%

\section{Singularities}
\label{sec-Singularities}

The following invariant plays a central role in this section.

\begin{definition}\label{def-embcodim}
Let $(R,\m)$ be a local $K$-algebra with $K=R/\m$.
\begin{enumerate}
\item[(a)] The $K$-vector space $\Cot_\m(R) = \m/\m^2$ is called the {\bf cotangent space} of~$R$.
Its dimension $\edim(R) = \dim_K(\m/\m^2)$ is called the {\bf embedding dimension} of~$R$. 

\item[(b)] The number $\ecod(R) = \edim(R) - \dim(R)$ is called the {\bf embedding codimension}
of~$R$. 
\end{enumerate}
\end{definition}

It is well-known that $\ecod(R) \ge 0$ and that $\ecod(R)=0$ if and only if~$R$ is a regular local ring.

Returning to the setting of the preceding sections, we let $A=K[a_1,\dots, a_m]$ be trivially
graded, let $P=A[x_1,\dots,x_k]$ be positively $\ZZ$-graded, and let $R=P/I$ be a positive
$A$-algebra. For the canonical morphism $\Theta:\; \Spec(R) \longrightarrow \AA^m_K$,
we define the following sets of singular points.

\begin{definition}\label{def-SingSets}
Let $R=P/I$ be a positive $A$-algebra, let $\Theta:\; \Spec(R) \longrightarrow \AA^m_K$
be its canonical morphism, and let $Z_R \subseteq \Spec(R)$ be its zero section.
\begin{enumerate}
\item[(a)] The set 
$$
\Sing_0(\Theta) = \{ \Gamma\in K^m \mid 
\ecod( R_{\Gamma_0})>0 \} = \{ \Gamma\in K^m \mid \Gamma_0 \in \Sing(R) \}  
$$ 
is called the set of {\bf zero section singularities} of~$\Theta$.

\item[(b)] The set 
$$
\Sing_v(\Theta) = \{ \Gamma \in K^m \mid 
\ecod(R_{F_\Gamma}) >0\} = \{ \Gamma \in K^m \mid (0,\dots,0) \in \Sing(F_\Gamma) \} 
$$ 
is called the set of {\bf fiber vertex singularities} of~$\Theta$.

\item[(c)] The set 
$$
\Sing_s(\Theta) = \{ \Gamma \in K^m \mid \{ (0,\dots,0) \} \subsetneq \Sing(F_\Gamma) \}
$$ 
is called the {\bf set of singular fibers} of~$\Theta$.

\end{enumerate}
\end{definition}

The following proposition extends some results proved in~\cite[Sect.~5]{KR5}.
It provides important inequalities which allow us to study the above sets of singularities.

\begin{proposition}{\bf (The Fundamental Inequalities)}\label{prop-fundamental}\\
Let $A=K[a_1,\dots, a_m]$ be trivially graded, let $P=A[x_1,\dots,x_k]$ be positively $\ZZ$-graded,
let $R=P/I$ be a positive $A$-algebra, and let $\Gamma=(\gamma_1, \dots, \gamma_m)$ be a point in~$K^m$.  
\begin{enumerate}
\item[(a)] We have 
$$
\begin{array}{ll}
d_{\Gamma_0} - m  \   \le d_{F_\Gamma} \hspace{-8pt} & {\overset{(1)}{=}}\ 
\dim_K(\Cot_{\langle X\rangle / I_\Gamma}( K[X]/I_\Gamma ) ) -\ecod(R_{F_\Gamma})\cr
& \overset{(2)}{=} \ {\dim_K( \Cot_{\m_{\Gamma_0}}(R)) -m - \ecod(R_{F_\Gamma})}^{\mathstrut}\cr
&\overset{(3)}{=} \ d_{\Gamma_0}      + \ecod(R_{\Gamma_0}) -m  - \ecod(R_{F_\Gamma})
\end{array}
$$

\item[(b)] We have $0\le \ecod(R_{F_\Gamma}) \le \ecod(R_{\Gamma_0})$.
\end{enumerate}
\end{proposition}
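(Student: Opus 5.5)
The approach is to realize the local ring of the fiber at its origin as a quotient of $R_{\Gamma_0}$ by $m$ elements. Then the inequality in~(a) is Krull's principal ideal theorem, and equalities~(1)--(3) are bookkeeping with cotangent spaces; part~(b) follows by comparing the two ends of the chain in~(a).

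\emph{Identifying the fiber ring.} Since $F_\Gamma$ is cut out in $\Spec(R)$ by $a_1-\gamma_1,\dots,a_m-\gamma_m$, and localizing commutes with taking quotients, we get
\[
R_{F_\Gamma}\cong R_{\Gamma_0}/\langle a_1-\gamma_1,\dots,a_m-\gamma_m\rangle .
\]
Indeed, $\M_{\Gamma_0}$ maps onto $\langle X\rangle$ modulo $\langle a_i-\gamma_i\rangle$. Going modulo $m$ elements in a Noetherian local ring lowers the dimension by at most $m$, so $d_{F_\Gamma}\ge d_{\Gamma_0}-m$, which is the inequality in~(a).

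\emph{Equalities (1) and (3).} Equality~(1) is the definition $\ecod(R_{F_\Gamma})=\edim(R_{F_\Gamma})-d_{F_\Gamma}$. Here I use that localizing $K[X]/I_\Gamma$ at $\langle X\rangle/I_\Gamma$ does not change the cotangent space $\m/\m^2$. Equality~(3) is likewise the definition $\ecod(R_{\Gamma_0})=\edim(R_{\Gamma_0})-d_{\Gamma_0}$, again with $\Cot_{\m_{\Gamma_0}}(R)=\Cot(R_{\Gamma_0})$.

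\emph{Equality (2): the key step.} We need
\[
\dim_K\Cot(R_{\Gamma_0}) \;=\; m+\dim_K\Cot(R_{F_\Gamma}).
\]
Write $V$ for the $K$-span of the residue classes of $a_i-\gamma_i$ in $\m_{\Gamma_0}/\m_{\Gamma_0}^2$. By the description of $R_{F_\Gamma}$ above, its cotangent space is $\m_{\Gamma_0}/(\m_{\Gamma_0}^2+V)$. So it suffices to show that the classes of $a_1-\gamma_1,\dots,a_m-\gamma_m$ are $K$-linearly independent, i.e.\ $\dim_K V=m$.

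This independence is where the structure of a positive $A$-algebra enters, and I expect it to be the main obstacle. I would use the zero section of Definition~\ref{def-ZeroSection}. The epimorphism $R\to A$ given by $X\mapsto 0$ (well defined since $I\subseteq\langle X\rangle$ by Remark~\ref{rem-special}) sends $\M_{\Gamma_0}/I$ to the maximal ideal $\mathfrak n=\langle a_1-\gamma_1,\dots,a_m-\gamma_m\rangle$ of $A$. Localizing gives a surjection of local rings $R_{\Gamma_0}\to A_{\mathfrak n}$, with both residue fields equal to $K$. This induces a surjection
\[
\m_{\Gamma_0}/\m_{\Gamma_0}^2 \longrightarrow \mathfrak n/\mathfrak n^2 .
\]
The target has dimension $m$, with basis the classes of $a_i-\gamma_i$ because $A_{\mathfrak n}$ is regular of dimension $m$. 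These basis elements are the images of the classes of $a_i-\gamma_i$ in the source, so those classes are linearly independent. Hence $\dim_K V=m$, which gives~(2).

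\emph{Part (b).} The bound $\ecod(R_{F_\Gamma})\ge 0$ is the standard fact recalled after Definition~\ref{def-embcodim}. Substituting the last line of~(a) into the inequality $d_{\Gamma_0}-m\le d_{F_\Gamma}$ and cancelling $d_{\Gamma_0}-m$ yields $\ecod(R_{F_\Gamma})\le\ecod(R_{\Gamma_0})$.
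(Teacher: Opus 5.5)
Your proof is correct, and its structure is the paper's. The inequality comes from $R_{F_\Gamma}\cong R_{\Gamma_0}/\langle a_1-\gamma_1,\dots,a_m-\gamma_m\rangle$. Equalities (1) and (3) are the definitions of embedding codimension, and (b) comes from comparing the two ends of the chain.

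The only real difference is equality (2). The paper does not prove it but imports it from \cite[Thm.~5.9.b]{KR5}. Within the paper's own toolkit, (2) is a linear-parts computation:
\begin{itemize}
\item the cotangent space of $R$ at $\Gamma_0$ has dimension $n-\dim_K\Lin_{\M_{\Gamma_0}}(I)$;
\item the cotangent space of the fiber at its origin has dimension $k-\dim_K\Lin_{\langle X\rangle}(I_\Gamma)$;
\item these two spaces of linear forms coincide by Remark~\ref{rem-Lin0Props}.a, also taken from \cite{KR5}.
\end{itemize}
The coincidence holds essentially because $I\subseteq\langle X\rangle$ forces the linear part at $\Gamma_0$ of every $f\in I$ to involve only $x_1,\dots,x_k$.

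You use the same input, $I\subseteq\langle X\rangle$, intrinsically. The zero-section retraction $R_{\Gamma_0}\twoheadrightarrow A_{\mathfrak n}$ shows that the classes of the $a_i-\gamma_i$ are $K$-linearly independent in $\m_{\Gamma_0}/\m_{\Gamma_0}^2$.

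What each route buys:
\begin{itemize}
\item Yours is self-contained and coordinate-free. It applies to any local ring that retracts onto a regular local ring in which the chosen elements form a regular system of parameters.
\item The linear-part route also presents both cotangent spaces explicitly by the single matrix $\LL_K(G_\Gamma)$. This is exactly what the paper uses later in Theorems~\ref{thm-CharSing0}.a and~\ref{thm-CharSing_v}.a.
\end{itemize}

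One small notational slip: the cotangent space of the quotient is $(\m_{\Gamma_0}/\m_{\Gamma_0}^2)/V$, that is, $\m_{\Gamma_0}/(\m_{\Gamma_0}^2+\langle a_i-\gamma_i\rangle)$. Writing it as $\m_{\Gamma_0}/(\m_{\Gamma_0}^2+V)$ mixes levels. To see that the ideal contributes only the $K$-span $V$ modulo $\m_{\Gamma_0}^2$, one uses $R_{\Gamma_0}=K+\m_{\Gamma_0}$.
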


\begin{proof}
First we show~(a).
The inequality follows from the fact that $R_{F_\Gamma}$ is obtained as the 
residue class ring of~$R_{\Gamma_0}$ modulo the ideal $\langle a_1-\gamma_1, \dots, a_m-\gamma_m \rangle$. 
Equality~(1) follows immediately from the definition of $\ecod(R_{F_\Gamma})$.
Equality~(2) follows from~\cite[Thm.~5.9.b]{KR5}.
Equality~(3)  follows directly from the definition of $\ecod(R_{\Gamma_0})$.

Claim~(b) follows from $d_{\Gamma_0} - m   \le d_{\Gamma_0} - m  
+ \ecod(R_{\Gamma_0}) - \ecod(R_{F_\Gamma})$ proved above.
\end{proof}

This proposition allows us to compare $\Sing_0(\Theta)$ and $\Sing_v(\Theta)$ as follows.

\begin{corollary}\label{cor-singsing}
Let $R=P/I$ be a positive $A$-algebra and $\Theta:\; \Spec(R) \longrightarrow \AA^m_K$
its canonical morphism. Then we have $\Sing_v(\Theta)  \subseteq \Sing_0(\Theta)$.
\end{corollary}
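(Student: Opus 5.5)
The inclusion is a direct consequence of part~(b) of Proposition~\ref{prop-fundamental}; it only remains to unwind the definitions of the two sets in Definition~\ref{def-SingSets}.

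Let $\Gamma\in\Sing_v(\Theta)$. By Definition~\ref{def-SingSets}(b), this means $\ecod(R_{F_\Gamma})>0$, i.e., the local ring of the fiber $F_\Gamma$ at its origin is not regular. Proposition~\ref{prop-fundamental}(b) gives the chain
$$
0 \;<\; \ecod(R_{F_\Gamma}) \;\le\; \ecod(R_{\Gamma_0}),
$$
so $\ecod(R_{\Gamma_0})>0$. By Definition~\ref{def-SingSets}(a), this says exactly that $\Gamma\in\Sing_0(\Theta)$. Equivalently, the zero point $\Gamma_0$ lies in $\Sing(R)$, because a local ring is regular if and only if its embedding codimension vanishes.

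There is essentially no obstacle here. The only point needing care is that both sets are defined through the embedding codimension, so they are directly comparable via Proposition~\ref{prop-fundamental}(b). That inequality in turn rests on equality~(2) of part~(a), i.e., on \cite[Thm.~5.9.b]{KR5}, which identifies the cotangent space dimensions of $R_{\Gamma_0}$ and $R_{F_\Gamma}$ up to the shift by~$m$. All of this may be taken as given.
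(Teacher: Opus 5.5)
Your proof is correct and is the paper's argument: the paper also obtains the inclusion directly from Proposition~\ref{prop-fundamental}(b), and you have just written out the unwinding of Definition~\ref{def-SingSets} via $0<\ecod(R_{F_\Gamma})\le\ecod(R_{\Gamma_0})$. One small correction to your side remark: part~(b) of that proposition rests on equality~(2) together with the dimension inequality $d_{\Gamma_0}-m\le d_{F_\Gamma}$ from part~(a), not on equality~(2) alone.
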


\begin{proof}
The claim follows from  item~(b) of the proposition.
\end{proof}

Next we characterize the regularity of the local ring of a point in the zero section
of~$\Theta$.

\begin{corollary}{\bf (Regularity of $R_{\Gamma_0}$)}\label{cor-regR}\\
Let $R=P/I$ be a positive $A$-algebra as above, and let $\Gamma=(\gamma_1, \dots, \gamma_m) \in K^m$.
Then the following conditions are equivalent.
\begin{enumerate}
\item[(a)] The local ring $R_{\Gamma_0}$ of the zero point over~$\Gamma$ is regular.

\item[(b)]  The local ring $R_{F_\Gamma}$ of the fiber at its origin is regular, 
and we have the equality $d_{F_\Gamma} = d_{\Gamma_0} - m$.
\end{enumerate}

In addition, if the ring $R_{F_\Gamma}$ is a regular local ring, 
then the fiber $F_\Gamma$ is isomorphic to an affine space.
\end{corollary}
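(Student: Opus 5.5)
The equivalence of~(a) and~(b) will be read off from the chain of equalities in Proposition~\ref{prop-fundamental}.a. Equality~(3), combined with the leftmost term, gives the identity
$$
d_{F_\Gamma} \;=\; d_{\Gamma_0} - m + \ecod(R_{\Gamma_0}) - \ecod(R_{F_\Gamma}).
$$
We also use that a local ring is regular if and only if its embedding codimension vanishes.

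For (a)$\Rightarrow$(b), assume $\ecod(R_{\Gamma_0})=0$. Part~(b) of Proposition~\ref{prop-fundamental} then forces $\ecod(R_{F_\Gamma})=0$, so $R_{F_\Gamma}$ is regular. Substituting both zeros into the identity yields $d_{F_\Gamma}=d_{\Gamma_0}-m$.

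For (b)$\Rightarrow$(a), assume $\ecod(R_{F_\Gamma})=0$ and $d_{F_\Gamma}=d_{\Gamma_0}-m$. The identity then reduces to $\ecod(R_{\Gamma_0})=0$, so $R_{\Gamma_0}$ is regular. This part is purely formal bookkeeping.

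For the additional claim, the plan is to exploit the grading. The ring $S=K[X]/I_\Gamma$ is positively graded by~$W$ with $S_0=K$, and its homogeneous maximal ideal is $\langle X\rangle/I_\Gamma$. Assume its localization $R_{F_\Gamma}$ is regular.
\begin{enumerate}
\item[(1)] Choose $W$-homogeneous elements of~$I_\Gamma$ whose linear parts span the image of $I_\Gamma$ in $\langle X\rangle/\langle X\rangle^2$. By regularity, the number $r$ of such elements equals $k-d_{F_\Gamma}$, since $\edim(R_{F_\Gamma}) = k - r$.
\item[(2)] Each chosen element has the form $x_{i_j}-g_j$ after renumbering, where $g_j$ is $W$-homogeneous of degree $w_{i_j}$ and lies in $\langle X\rangle^2$. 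Consequently $g_j$ involves only indeterminates of strictly smaller weight.
\item[(3)] Apply the graded Nakayama lemma, or rather a triangular elimination ordered by weight, to the ideal $J$ generated by these $r$ elements. Then $K[X]/J$ is isomorphic to a polynomial ring in $k-r$ indeterminates.
\item[(4)] Compare $K[X]/J$ with $S$. Since $K[X]/J$ is a domain of dimension $d_{F_\Gamma}$ surjecting onto~$S$, and localization at the homogeneous maximal ideal detects the dimension of a graded ring, we get $\dim S = d_{F_\Gamma}$. Hence the kernel $I_\Gamma/J$ is zero, and $S\cong K[x_{j_1},\dots,x_{j_{k-r}}]$, i.e., $F_\Gamma\cong\AA^{k-r}_K$.
\end{enumerate}
This last claim is the expected content of \cite[Thm.~5.9]{KR5}, which could alternatively simply be cited.

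The main obstacle is step~(4), namely ensuring that no further relations survive beyond~$J$. The argument rests on the fact that, for positively graded rings, the local dimension at the vertex equals the global dimension, so a surjection from a domain of the same dimension is an isomorphism.
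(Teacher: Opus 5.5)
Your proof of the equivalence (a)$\Leftrightarrow$(b) is the paper's argument. Both directions are read off from the identity $d_{F_\Gamma} = d_{\Gamma_0} - m + \ecod(R_{\Gamma_0}) - \ecod(R_{F_\Gamma})$ in Proposition~\ref{prop-fundamental}.a, together with part~(b) of that proposition.

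The routes differ only in the additional claim. The paper simply cites \cite[Thm.~5.9.d]{KR5}. You instead give a self-contained argument in two stages. First, you eliminate the pivot indeterminates triangularly, ordered by weight. Second, you compare dimensions: the resulting polynomial ring in $k-r=d_{F_\Gamma}$ indeterminates surjects onto $K[X]/I_\Gamma$, and that ring has dimension $d_{F_\Gamma}$ by Remark~\ref{rem-affine}, so the kernel vanishes. This is correct. It makes the corollary independent of the external reference, at the cost of some length. It is presumably what the cited result achieves via a separating re-embedding.

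One detail in step~(2) needs repair. After renumbering, the linear part of a chosen element need not be a single indeterminate; think of $x_1+x_2$ with $w_1=w_2$. There are two ways to fix this:
\begin{enumerate}
\item[(i)] Perform a $W$-homogeneous linear change of coordinates within each weight class. This is possible because the linear part of a homogeneous element of degree~$d$ involves only indeterminates of weight~$d$.
\item[(ii)] Bring the linear parts into reduced row echelon form, and allow $g_j$ to contain linear terms in non-pivot indeterminates of the same weight.
\end{enumerate}
In either version the substitution in step~(3) still works, since $g_j$ never involves a pivot indeterminate of weight $\ge w_{i_j}$.
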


\begin{proof} 
First we prove $(a)\Rightarrow(b)$.
The regularity of $R_{F_\Gamma}$ follows from Proposition~\ref{prop-fundamental}.b,
and the equality follows from the equality 
$d_{F_\Gamma} = d_{\Gamma_0}    -m  + \ecod(R_{\Gamma_0}) - \ecod(R_{F_\Gamma})$
proved in Proposition~\ref{prop-fundamental}.a.

To prove  $(b)\Rightarrow(a)$ we notice that,  using equality (3) of Proposition~\ref{prop-fundamental}.a, 
 the regularity of $R_{F_\Gamma}$ and  $d_{F_\Gamma} = d_{\Gamma_0} -m$
imply  $\ecod(R_{\Gamma_0})=0$. 

The additional claim is proved in~\cite[Thm. 5.9.d]{KR5}.
\end{proof}

The regularity of the local ring of a fiber of~$\Theta$ at its origin can be characterized as follows.

\begin{corollary}{\bf (Regularity of $R_{F_\Gamma}$)}\label{cor-regF}\\
Let $R=P/I$ be a positive $A$-algebra as above, and let $\Gamma=(\gamma_1, \dots, \gamma_m) \in K^m$. 
Then the following conditions are equivalent.
\begin{enumerate}
\item[(a)] The local ring $R_{F_\Gamma}$ of the fiber at its origin is regular.

\item[(b)] We have $ d_{F_\Gamma}  =  \dim_K( \Cot_{\m_{\Gamma_0}}(R)) -m =  d_{\Gamma_0} -m +   \ecod(R_{\Gamma_0})$.
\end{enumerate}
\end{corollary}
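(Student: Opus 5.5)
This is a direct consequence of the chain of equalities in Proposition~\ref{prop-fundamental}.a, read backwards. The plan is to use equality~(1), which expresses $d_{F_\Gamma}$ as $\edim(R_{F_\Gamma}) - \ecod(R_{F_\Gamma})$. Then use equality~(2), which identifies $\edim(R_{F_\Gamma}) = \dim_K(\Cot_{\langle X\rangle/I_\Gamma}(K[X]/I_\Gamma))$ with $\dim_K(\Cot_{\m_{\Gamma_0}}(R)) - m$. Finally, equality~(3) rewrites the latter as $d_{\Gamma_0} - m + \ecod(R_{\Gamma_0})$.

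The combined identity I will record is
$$
d_{F_\Gamma} \;=\; \dim_K(\Cot_{\m_{\Gamma_0}}(R)) - m - \ecod(R_{F_\Gamma})
\;=\; d_{\Gamma_0} - m + \ecod(R_{\Gamma_0}) - \ecod(R_{F_\Gamma}),
$$
valid for every $\Gamma\in K^m$. The second equality in condition~(b) of the corollary, namely $\dim_K(\Cot_{\m_{\Gamma_0}}(R)) - m = d_{\Gamma_0} - m + \ecod(R_{\Gamma_0})$, is just the definition of $\ecod(R_{\Gamma_0})$. So it holds unconditionally, and condition~(b) reduces to the single equation $d_{F_\Gamma} = \dim_K(\Cot_{\m_{\Gamma_0}}(R)) - m$.

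For $(a)\Rightarrow(b)$: regularity of $R_{F_\Gamma}$ means $\ecod(R_{F_\Gamma})=0$, and substituting this into the identity gives~(b) at once. For $(b)\Rightarrow(a)$: comparing~(b) with the identity forces $\ecod(R_{F_\Gamma})=0$, which is the regularity of the local ring $R_{F_\Gamma}$ by the standard fact recalled after Definition~\ref{def-embcodim}.

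There is no real obstacle. The only point needing care is that equality~(2) is borrowed from \cite[Thm.~5.9.b]{KR5}, and we simply invoke it through Proposition~\ref{prop-fundamental}. The argument mirrors the proof of Corollary~\ref{cor-regR}, except that here no hypothesis on $\ecod(R_{\Gamma_0})$ is made.
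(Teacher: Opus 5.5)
Your proof is correct and takes essentially the same approach as the paper, which derives the equivalence directly from equalities (2) and (3) of Proposition~\ref{prop-fundamental}. As in your argument, the second equality in (b) holds by definition of $\ecod(R_{\Gamma_0})$, so (b) is equivalent to $\ecod(R_{F_\Gamma})=0$, i.e.\ to the regularity of $R_{F_\Gamma}$.
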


\begin{proof}
The equivalence  follows from 
equalities (2) and (3)  in Proposition~\ref{prop-fundamental}.
\end{proof}

\begin{remark}\label{rem-affine}
The ring $R_{F_\Gamma} = (K[X]/I_\Gamma)_{\langle X\rangle /I_\Gamma}$ 
is the localization of a positively graded ring at its irrelevant maximal ideal. 
Thus we obtain the equalities $\dim(F_\Gamma) = \dim(K[X]/I_\Gamma) = \dim(R_{F_\Gamma}) = d_{F_\Gamma}$.
\end{remark}

The following example illustrates some aspects of the preceding results.

\begin{example}\label{ex-baddim}
Let $A=\QQ[a, b]$, let $P=A[x, y]$ be graded by $W =(1,1)$, let $I = \langle ax +by\rangle$, 
let $R=P/I$, and let $\Gamma = (0,0)$. Then the map $\theta: \; A \To R$ is injective 
and the coordinate ring of the fiber  over~$\Gamma$ is $\QQ[x, y]$. 

Here $R_{F_\Gamma} = \QQ[x, y]_{\langle x, y \rangle}$ is smooth, but the local ring 
$R_{\Gamma_0} =  R_{\langle a, b, x, y\rangle}$ is singular.
Note that $d_{F_\Gamma} = 2$, $\ecod(R_{\Gamma_0}) =1$, $d_{\Gamma_0} =3$, and $m =2$. 
Hence we have $ d_{F_\Gamma}  =    \ecod(R_{\Gamma_0}) +d_{\Gamma_0} -m$ as prescribed by 
Corollary~\ref{cor-regF}.b.

Notice that in this case the inclusion proved in Corollary~\ref{cor-singsing} is strict.
Finally, note that $R$ is an integral domain, and thus $\Spec(R)$ is irreducible.
\end{example}

\bigbreak
%%%%%%%%%%%%%%%%%%%%%%%%%%%%%%%%%%%%%%%%%%%%%%%%%%%%%%%%%
%
% Section 5. A-Linear Coefficient Matrices
%
%%%%%%%%%%%%%%%%%%%%%%%%%%%%%%%%%%%%%%%%%%%%%%%%%%%%%%%%%

\section{A-Linear Coefficient Matrices}
\label{sec-AlinCoeffMat}

Continuing in the setting of the preceding sections, we let $A=K[a_1,\dots, a_m]$ be trivially
graded, we let $P=A[x_1,\dots,x_k]$ be positively graded, and we let $R=P/I$ be a positive $A$-algebra.
Recall that, for every $\Gamma=(\gamma_1,\dots,\gamma_m) \in K^m$, we defined the ideal 
$I_\Gamma$ as the image of~$I$ under the epimorphism $\epsilon_\Gamma:\; P \longrightarrow K[X]$ 
given by $\epsilon_\Gamma(a_i) = \gamma_i$ for $i=1,\dots,m$ and $\epsilon_\Gamma(x_j)=x_j$
for $j=1, \dots, k$. 

Moreover, for a point 
$\Pi=(\gamma_1,\dots,\gamma_m,\pi_1,\dots,\pi_k) \in K^n$, the linear part of a polynomial
$f\in P$ with respect to the maximal ideal $\M_\Pi = \langle a_1-\gamma_1,\dots,
a_m - \gamma_m,\, x_1-\pi_1,\dots, x_k - \pi_k \rangle$ is defined in~\cite[Sect.~1]{KLR1}
as $\Lin_{\M_\Pi}(f) = \ell(a_1-\gamma_1,\dots, a_m - \gamma_m,\, x_1-\pi_1,\dots, x_k - \pi_k)$,
where~$\ell$ is the homogeneous component of standard degree~1 of
$f(a_1+\gamma_1, \dots, a_m +\gamma_m,\, x_1+\pi_1, \dots, x_k + \pi_k)$.

\medskip
The following definition generalizes~\cite[Def.~1.6]{KLR1}.

\begin{definition}\label{def-linA0}
Let $f\in P$, and let $I$ be an ideal in~$P$.
\begin{enumerate}
\item[(a)] Write $f = f_0 + f_1 x_1 + \cdots + f_k x_k + g$ with $f_0,\dots,f_k \in A$
and $g\in \langle X \rangle^2$. Then $\Lin_A(f) = f_1 x_1 + \cdots + f_k x_k$ is called
the {\bf $A$-linear part} of~$f$.

\item[(b)] The $A$-module generated by the $A$-linear parts of the polynomials
in~$I$ is called the {\bf $A$-linear part} of~$I$ and denoted by $\Lin_A(I)$. In other words,
we let $\Lin_A(I) = \langle \Lin_A(f) \mid f\in I \rangle_A$.
\end{enumerate}
\end{definition}

The next example illustrates this definition.

\begin{example}\label{ex-Lin0}
Let $A=\QQ[a]$, let $P=A[x_1,x_2, x_3]$, and let $W=(1,1,1)$.
Then we have $\Lin_A(a -ax_1 + x_2 + a^2 x_2 +a^2x_1^2 - x_3^2) = -ax_1 + (1+a^2)\, x_2$.
\end{example}

The following properties were shown in~\cite[Sect.~1]{KLR1} and~\cite[Thm.~5.9]{KR5}.

\begin{remark}\label{rem-Lin0Props}
Let $R=P/I$ be a positive $A$-algebra,
let $\Gamma = (\gamma_1,\dots,\gamma_m) \in K^m$, and let 
$\Gamma_0 = (\gamma_1, \dots, \gamma_m,\, 0, \dots,0)$ be the 
zero point over~$\Gamma$.  Then the following claims hold true.
\begin{enumerate}
\item[(a)] By~\cite[Thm.~5.9.a]{KR5}, we have 
$\Lin_{\langle X\rangle}(I_\Gamma) = \Lin_{\M_{\Gamma_0}}(I)$.

\item[(b)] It follows from the definition that we have the equalities
$\Lin_{\langle X\rangle}(I_\Gamma) = \Lin_{\M_{\Gamma_0}}(I) = \epsilon_\Gamma(\Lin_A(I))$.

\item[(c)] Given a set of generators $\{g_1,\dots, g_s\}$ of~$I$, we have
$$
\Lin_{\langle X\rangle}(I_\Gamma) = \langle \Lin(\epsilon_\Gamma(g_1)), \dots, \epsilon_\Gamma(g_s)\rangle_K
$$
by~\cite[Prop.~1.6]{KLR1}.

\end{enumerate}
\end{remark}

The next proposition generalizes  Remark~\ref{rem-Lin0Props}.c by showing 
that $\Lin_A(I)$ can be computed from any system of generators of~$I$.

\begin{proposition}\label{prop-indepOfGen}
Let $R=P/I$ be a positive $A$-algebra, let $G=\{g_1,\dots,g_s\}$
be a system of generators of~$I$, and let $\Gamma=(\gamma_1,\dots,\gamma_m) \in K^m$.
Then the following claims hold true.
\begin{enumerate}
\item[(a)] $\Lin_A(I) = \langle X\rangle_A \cap (I + \langle X\rangle^2)$

\item[(b)] $\Lin_A(I) = \langle \Lin_A(g_1),\dots, \Lin_A(g_s) \rangle_A$
 
\item[(c)] The map~$\epsilon_\Gamma$ induces an epimorphism of $K$-vector spaces
$$
\Cot_{\M_{\Gamma_0}/I}(P/I) \twoheadrightarrow 
\langle X\rangle_K / \Lin_{\langle X\rangle}(I_\Gamma) \cong \Cot_{\langle X\rangle/I_\Gamma}(K[X]/I_\Gamma)
$$
\end{enumerate}
\end{proposition}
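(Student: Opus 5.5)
The plan is to prove (a) by direct manipulation of the decomposition in Definition~\ref{def-linA0}.a, derive (b) from (a), and get (c) by tensoring with the evaluation map~$\epsilon_\Gamma$. Here $\langle X\rangle_A$ denotes the free $A$-module $Ax_1\oplus\cdots\oplus Ax_k$ of $A$-linear forms.

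For (a), the first step is the observation that $I\subseteq\langle X\rangle$ (Remark~\ref{rem-special}.b). Hence for every $f\in I$ we have $f_0=0$, so $f = \Lin_A(f) + g$ with $g\in\langle X\rangle^2$. This gives $\Lin_A(f) = f-g \in I+\langle X\rangle^2$. Since $\Lin_A(f)$ is an $A$-linear form, the inclusion $\Lin_A(I)\subseteq \langle X\rangle_A\cap(I+\langle X\rangle^2)$ follows; the right-hand side is an $A$-module, so it contains the span. Conversely, let $\ell\in\langle X\rangle_A$ be written as $\ell = f+h$ with $f\in I$ and $h\in\langle X\rangle^2$. Comparing $A$-linear parts, and using that $\Lin_A$ is $A$-linear and kills $\langle X\rangle^2$, we get $\ell=\Lin_A(\ell)=\Lin_A(f)\in\Lin_A(I)$.

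For (b), the inclusion $\supseteq$ is clear. For $\subseteq$, take $f=\sum_i p_i g_i\in I$ with $p_i\in P$ and write $p_i = p_i(0) + (\text{terms in }\langle X\rangle)$, where $p_i(0)\in A$ is $p_i$ with all $x_j\mapsto 0$. Since $g_i\in\langle X\rangle$, the terms of $p_i$ lying in $\langle X\rangle$ contribute to $\langle X\rangle^2$. Therefore $\Lin_A(f)=\sum_i p_i(0)\Lin_A(g_i)$. The only point to watch is again that $g_i$ has no constant term, which holds by Remark~\ref{rem-special}.b.

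For (c), first identify $\Cot_{\M_{\Gamma_0}/I}(P/I) = \M_{\Gamma_0}/(I+\M_{\Gamma_0}^2)$ and $\Cot_{\langle X\rangle/I_\Gamma}(K[X]/I_\Gamma)=\langle X\rangle/(I_\Gamma+\langle X\rangle^2)$. The latter is isomorphic to $\langle X\rangle_K/\Lin_{\langle X\rangle}(I_\Gamma)$ by the $K[X]$-analogue of (a) (or by \cite[Sect.~1]{KLR1}). Next, $\epsilon_\Gamma$ maps $\M_{\Gamma_0}$ onto $\langle X\rangle$, because $a_i-\gamma_i\mapsto 0$ and $x_j\mapsto x_j$. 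It also maps $\M_{\Gamma_0}^2$ into $\langle X\rangle^2$ and $I$ onto $I_\Gamma$. Hence it induces a well-defined surjective $K$-linear map between the quotients. The main obstacle is only bookkeeping: checking that these maps are compatible. By Remark~\ref{rem-Lin0Props}.b, the kernel of the resulting map is spanned by the classes of the $a_i-\gamma_i$ (consistent with equality~(2) of Proposition~\ref{prop-fundamental}), but surjectivity is all that is claimed, and it follows immediately from the surjectivity of $\epsilon_\Gamma$ on $\M_{\Gamma_0}\to\langle X\rangle$.
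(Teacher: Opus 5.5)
Your proof is correct, and part~(a) is the paper's argument. For (b) and (c) you take slightly different, more self-contained routes.

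In~(b) the paper does use~(a). It rewrites $\Lin_A(I)=\langle X\rangle_A\cap(\langle \Lin_A(g_1),\dots,\Lin_A(g_s)\rangle_A+\langle X\rangle^2)$ and then applies the modular law together with $\langle X\rangle_A\cap\langle X\rangle^2=0$. Your direct computation $\Lin_A(\sum_i p_ig_i)=\sum_i p_i(0)\Lin_A(g_i)$ never needs~(a), contrary to what your plan announces. It is, however, exactly the fact the paper's middle equality $I+\langle X\rangle^2=\langle \Lin_A(g_1),\dots,\Lin_A(g_s)\rangle_A+\langle X\rangle^2$ uses without comment, so the two arguments have the same content.

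In~(c) the paper quotes \cite[Prop.~1.8.b]{KLR1} to identify $\Cot_{\M_{\Gamma_0}/I}(P/I)$ with $\M_1/\Lin_{\M_{\Gamma_0}}(I)$, where $\M_1$ is the $K$-span of the $a_i-\gamma_i$ and the $x_j$. It then uses $\Lin_{\M_{\Gamma_0}}(I)=\Lin_{\langle X\rangle}(I_\Gamma)$ from Remark~\ref{rem-Lin0Props}.b, so the map becomes the projection that kills the $a_i-\gamma_i$. You instead push $\M_{\Gamma_0}/(I+\M_{\Gamma_0}^2)$ forward along $\epsilon_\Gamma$. For surjectivity this needs only $\epsilon_\Gamma(\M_{\Gamma_0})=\langle X\rangle$, $\epsilon_\Gamma(I)=I_\Gamma$, and the linear-form description of the cotangent space on the fiber side.

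Your version is more elementary and avoids the citation. The paper's version makes it immediate that the kernel has dimension exactly~$m$: the classes of the $a_i-\gamma_i$ are independent because linear parts of elements of~$I$ at $\Gamma_0$ contain no $a_i-\gamma_i$ terms. That dimension count is the one matching equality~(2) of Proposition~\ref{prop-fundamental}.
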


\begin{proof}
To prove~(a), we start with the containment ``$\subseteq$''.
Since~$R$ is a positive $A$-algebra, we have $I \subseteq \langle X\rangle$.
Hence, if we let $\langle X\rangle_A =  A x_1 + \cdots + A x_k$, then 
an element $f\in I$ can be written uniquely in the form $f = \ell + h$ with
$\ell \in \langle X\rangle_A$ and $h\in \langle X\rangle ^2$.
Then $\Lin_A(f) = \ell  \in \langle X\rangle_A$ and 
$\ell = f - h \in I + \langle X\rangle^2$ imply the claim.  

Conversely, let $\ell \in \langle X \rangle_A$ be of the form 
$\ell = f + h$ with $f\in I$ and $h\in \langle X\rangle^2$. Then $f = \ell - h$ 
shows that $\ell = \Lin_A(f) \in \Lin_A(I)$, and this concludes the proof.

Next we show~(b). For $i=1,\dots,s$, we write $g_i = \Lin_A(g_i)+ h_i$ with
$h_i \in \langle X\rangle^2$. Using~(a) and the modular law, we get
\begin{align*}
\Lin_A(I) &\;=\; \langle X \rangle_A \cap (\langle g_1, \dots, g_s \rangle + \langle X \rangle^2)\\
&\;=\; \langle X \rangle_A \cap 
(\langle \Lin_A(g_1), \dots, \Lin_A(g_s) \rangle_{A} + \langle X \rangle^2)\\
&\;=\;  \langle \Lin_A(g_1), \dots, \Lin_A(g_s) \rangle_A
+ ( \langle X \rangle_A \cap \langle X \rangle^2 ) \\
&\;=\; \langle \Lin_A(g_1), \dots, \Lin_A(g_s) \rangle_A
\end{align*}
as was to be shown.

Finally, to prove~(c), we note that~\cite[Prop.~1.8.b]{KLR1} yields
$\Cot_{\M_{\Gamma_0}/I}(P/I) \cong \M_1 / \Lin_{\M_{\Gamma_0}}(I)$,
where $\M_1 = \langle a_1-\gamma_1,\dots,a_m-\gamma_m,\, x_1, \dots, x_k \rangle_K$.
Therefore the claim follows from Remark~\ref{rem-Lin0Props}.b.
\end{proof}

To have an input for explicit algorithms, we put the coefficients of the linear parts
of a system of homogeneous generators of~$I$ into a matrix in the following way.

\begin{definition}\label{def-L0Matrix}
Let $R=P/I$ be a positive $A$-algebra, and let $G=(g_1,\dots,g_s)$
be a tuple which is a homogeneous system of generators of~$I$.
For $i=1,\dots,s$, we write $\Lin_A(g_i) = g_{i1} x_1 + \cdots + 
g_{i,k} x_k$ with $g_{ij}\in A$. 
\begin{enumerate}
\item[(a)] The matrix $\LL_A(G) = (g_{ij}) \in \Mat_{s,k}(A)$ is called
the {\bf $A$-linear coefficient matrix} of~$G$.

\item[(b)] For $\Gamma = (\gamma_1,\dots,\gamma_m) \in K^m$,
the matrix $\LL_K(G_\Gamma) = (\epsilon_\Gamma(g_{ij}) ) \in \Mat_{s,k}(K)$
is called the {\bf $K$-linear coefficient matrix} of 
$G_\Gamma = \{ \epsilon_\Gamma(g_1), \dots, \epsilon_\Gamma(g_s) \}$ at the point~$\Gamma$.

\end{enumerate}
\end{definition}

For every $\Gamma\in K^m$, the matrix $\LL_K(G_\Gamma)$ is the Jacobian matrix of~$I_\Gamma$ at the origin
of the fiber~$F_\Gamma$ and yields a presentation of the cotangent space of~$R_{F_\Gamma}$ at this point
(cf.~\cite[Prop.~1.8]{KLR1}).
Thus the matrix~$\LL_A(G)$ may be thought of as a family of matrices defining the cotangent spaces of
the fibers of~$\Theta$ at the points of the zero section.

The generic fiber of~$\Theta$  is defined as follows.

\begin{definition}\label{def-GenFib}
Let $L=K(a_1,\dots, a_m)$ be the quotient field of~$A$.
Then the ideal $I_L = I\, L[X]$ is called the {\bf generic fiber ideal} of~$\Theta$ and
$R_L = L[X]/I_L$ is the  coordinate ring of the {\bf generic fiber} of~$\Theta$.
\end{definition}

The $A$-linear coefficient matrix has the following basic properties.

\begin{remark}\label{rem-blockmat}
Let $R=P/I$ be a positive $A$-algebra, and let $G = ( g_1,\dots,g_s)$
be a homogeneous system of generators of~$I$ such that $\deg_W(g_1) \le \cdots \le \deg_W(g_s)$.
Denote the distinct $W$-degrees of the elements of~$G$ by $1\le d_1 < \cdots < d_\nu$.
\begin{enumerate}
\item[(a)] The $A$-linear coefficient matrix $\LL_A(G)$ is a block diagonal matrix, where
the size of the $j$-th block is $\# \{ g_i \in G \mid \deg(g_i) = d_j\}$ for $j=1,\dots,\nu$.

\item[(b)] The local ring at the origin of the generic fiber is $(R_L)_{\langle X\rangle R_L}$.
The cotangent module of this ring is
$$
\Cot_{\langle X\rangle R_L}((R_L)_{\langle X\rangle R_L}) \;\cong\;
\langle X\rangle_L \,/\,
\langle \Lin_A(g_1),\dots, \Lin_A(g_s) \rangle_L
$$
The matrix $\LL_A(G)$ is the coefficient matrix of $(\Lin_A(g_1),\dots, \Lin_A(g_s))$
with respect to the basis $(x_1,\dots,x_k)$ of $\langle X\rangle_L$.

\item[(c)] Let $d_L=\dim(R_L)$ be the dimension of the generic fiber. 
Then 
$$
\dim_L ( \Cot_{\langle X\rangle R_L}((R_L)_{\langle X\rangle R_L})) \;=\; k - \rk(\LL_A(G)) \;\ge\; d_L
$$
Hence the origin of the generic fiber is smooth if and only if $\rk(\LL_A(G)) = k - d_L$.

\end{enumerate}
\end{remark}

In the following example, various aspects of the preceding remark come into play.

\begin{example}\label{ex-LinMatrix}
Let $A=\QQ[a]$ be trivially graded, let $P=A[x,y,z]$ be graded by $W=(1,2,2)$, and let
$R=P/I$ be the positive $A$-algebra defined by $I=\langle g\rangle$, where $g=y -ay +a^2z -x^2$.

Then the $A$-linear part of~$I$ is $\Lin_A(I) = \langle \Lin_A(g) \rangle_A = 
\langle (1-a)y + a^2z \rangle_A$. For the tuple $G=(g)$, the $A$-linear coefficient matrix
is $\LL_A(G) = (1{-}a \;\; a^2 \;\;  0)$.

Finally, letting $L = \QQ(a)$, the coordinate ring of the generic fiber is
$R_L = L[x, y, z]/\langle y + \tfrac{a^2}{1-a}z - \tfrac{1}{1-a}x^2 \rangle$,
and hence we have an isomorphism  $R_L \cong L[x,z]$.
\end{example}

\bigbreak
%%%%%%%%%%%%%%%%%%%%%%%%%%%%%%%%%%%%%%%%%%%%%%%%%%%%%%
%
% Section 6: Computing the Singularities of~$\Theta$
%
%%%%%%%%%%%%%%%%%%%%%%%%%%%%%%%%%%%%%%%%%%%%%%%%%%%%%%

\section{Computing the Singularities of \texorpdfstring{$\Theta$}{Theta}}
\label{sec-CompSing}

In this section we use the fundamental inequalities (see Prop.~\ref{prop-fundamental}) 
and the $A$-linear coefficient matrix (see Def.~\ref{def-L0Matrix})
to compute the singular loci associated to~$\Theta$.
As before, we let $A=K[a_1, \dots, a_m]$ be trivially graded, let $P=A[x_1,\dots,x_k]$
be positively graded, and let $R=P/I$ be a positive $A$-algebra.
Moreover, let $\Theta:\; \Spec(R) \longrightarrow \AA^m_K$ be the canonical morphism,
and let~$G$ be a tuple of $W$-homogeneous polynomials which generates~$I$.
Recall that the set of zero section singularities of~$\Theta$ is defined as
$$
\Sing_0(\Theta) = \{\Gamma \in K^m \mid \ecod(R_{\Gamma_0}) > 0 \} 
$$
In this situation, the $A$-linear coefficient matrix $\LL_A(G)$ can be used to characterize 
$\Sing_0(\Theta)$ as follows.

\begin{theorem}{\bf (Characterization of Zero Section Singularities) }\label{thm-CharSing0}\\
Let $R=P/I$ be a positive $A$-algebra of dimension~$d$, let $\Theta:\; \Spec(R) \longrightarrow
\AA^m_K$ be its canonical morphism, let $G = (g_1,\dots, g_s)$
be a tuple of $W$-homogeneous polynomials which generate~$I$, let $\LL_A(G)$ be 
the $A$-linear coefficient matrix of~$G$, and let $J_{n-d}$ be the ideal of~$A$ generated by all 
minors of size $n-d$ of~$\LL_A(G)$.
\begin{enumerate}
\item[(a)] For every $\Gamma\in K^m$, we have 
$$
\rk(\LL_K(G_\Gamma)) \;=\; n - \dim_K ( \Cot_{\M_{\Gamma_0}}(R) ) \;\le\; n - d_{\Gamma_0}
$$

\item[(b)] $\Sing_0(\Theta) = \{\Gamma \in K^m \mid \rk(\LL_K(G_\Gamma))
< n-d_{\Gamma_0} \}$.

\item[(c)] If $\rk(\LL_A(G)) < n-d$ then $\Sing_0(\Theta)=\AA^m_K$,
i.e., the zero section $Z_R$ is contained in the singular locus of~$\Spec(R)$.

\item[(d)] If $\rk(\LL_A(G)) = n-d$ then $\mathcal{Z}(J_{n-d}) \subseteq
\Sing_0(\Theta)$.

\item[(e)] Assume that~$\Spec(R)$ is equidimensional. Then $\rk(\LL_K(G)) \le n-d$
and
$\Sing_0(\Theta) = \mathcal{Z}(J_{n-d})$, i.e., the set of zeros of~$J_{n-d}$.

\end{enumerate}
\end{theorem}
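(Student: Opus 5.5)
Part~(a) is where I would start, since it drives everything else. By Proposition~\ref{prop-indepOfGen}.c, or more directly by the isomorphism $\Cot_{\M_{\Gamma_0}/I}(P/I)\cong \M_1/\Lin_{\M_{\Gamma_0}}(I)$ from \cite[Prop.~1.8.b]{KLR1}, the cotangent space of $R_{\Gamma_0}$ is the quotient of the $n$-dimensional space $\M_1$ by $\Lin_{\M_{\Gamma_0}}(I)$. By Remark~\ref{rem-Lin0Props}.b,c this subspace equals $\epsilon_\Gamma(\Lin_A(I))$, and Proposition~\ref{prop-indepOfGen}.b shows it is spanned by $\epsilon_\Gamma(\Lin_A(g_1)),\dots,\epsilon_\Gamma(\Lin_A(g_s))$. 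Its dimension is therefore $\rk(\LL_K(G_\Gamma))$, which gives the equality in~(a). The inequality then follows from $\edim(R_{\Gamma_0})\ge d_{\Gamma_0}$.

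Part~(b) is immediate from~(a): $\ecod(R_{\Gamma_0})>0$ holds exactly when $\dim_K\Cot>d_{\Gamma_0}$, which by~(a) means $\rk(\LL_K(G_\Gamma))<n-d_{\Gamma_0}$.

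For (c) and (d), use two facts: $d_{\Gamma_0}\le d$, and specialization does not increase rank, so $\rk\LL_K(G_\Gamma)\le\rk\LL_A(G)$.
\begin{itemize}
\item In~(c), $\rk\LL_K(G_\Gamma)\le \rk\LL_A(G)<n-d\le n-d_{\Gamma_0}$, so (b) applies at every $\Gamma$.
\item In~(d), if $\Gamma\in\mathcal Z(J_{n-d})$, then all $(n-d)$-minors vanish at $\Gamma$, so $\rk\LL_K(G_\Gamma)<n-d\le n-d_{\Gamma_0}$, and again (b) applies.
\end{itemize}

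Part~(e) is the main obstacle: we need $d_{\Gamma_0}=d$ for every $\Gamma$. The plan is to show that every irreducible component of $\Spec(R)$ passes through $\Gamma_0$. Each minimal prime $\mathfrak p$ of $R$ is $W$-homogeneous because the grading is positive, so $\mathfrak p\subseteq\langle X\rangle R$ in the weaker sense that $\mathfrak p+\langle X\rangle$ is proper.

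This alone does not put $\Gamma_0$ on the component $V(\mathfrak p)$. We also need $\mathfrak p\cap A\subseteq\langle a-\gamma\rangle$, and that can fail, for instance when a component lies over a proper closed subset of the base. I would try to handle this via the homogeneity: the zero section of $V(\mathfrak p)$ is $V(\mathfrak p+\langle X\rangle)$, and by the weighted $\mathbb G_m$-contraction of Proposition~\ref{prop-Rconnected}.a it equals the image $\Theta(V(\mathfrak p))$.

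Consequently, at $\Gamma_0$ the local dimension is the maximum of $\dim V(\mathfrak p)$ over the components through $\Gamma_0$. Under equidimensionality this maximum is $d$ whenever $\Gamma_0$ lies on any component, which it does since $\Gamma_0\in\Spec(R)$. Hence $d_{\Gamma_0}=d$ for all $\Gamma$.

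Then (a) gives $\rk\LL_K(G_\Gamma)\le n-d$ for all $\Gamma$. Since the rank over $L=\mathrm{Frac}(A)$ is attained at a generic point, this also gives $\rk\LL_A(G)\le n-d$. Finally, (b) becomes $\Gamma\in\Sing_0(\Theta)\iff\rk\LL_K(G_\Gamma)<n-d$, which holds exactly when all $(n-d)$-minors vanish at $\Gamma$, that is, $\Gamma\in\mathcal Z(J_{n-d})$.
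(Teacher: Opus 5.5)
Your proof is correct and follows the paper's argument. Part (a) comes from the presentation $\M_1/\Lin_{\M_{\Gamma_0}}(I)$ of the cotangent space together with $\Lin_{\M_{\Gamma_0}}(I)=\epsilon_\Gamma(\Lin_A(I))$, parts (b)--(d) from rank comparison under specialization and $d_{\Gamma_0}\le d$, and part (e) from $d_{\Gamma_0}=d$; the paper phrases the rank bound in (e) as a contradiction argument, which amounts to the same thing.

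Two small remarks. First, the detour about every component passing through $\Gamma_0$ is unnecessary and false in general (take $I=\langle ax,by^2\rangle$ and $\Gamma=(1,1)$). Your final argument only uses that $\Gamma_0$ lies on some component of dimension $d$. Second, as the paper does, you should pass to $\bar K$ so that the generic rank of $\LL_A(G)$ is attained at a rational point.
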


\begin{proof}
First we prove the equality in~(a). By~\cite[Prop.~1.5.b]{KLR1}, we have the equality 
$n -\dim_K ( \Cot_{\M_{\Gamma_0}/I}(R) ) = \dim_K (\Lin_{\M_{\Gamma_0}}(I) )$,
and Remark~\ref{rem-Lin0Props} implies
$$
\Lin_{\M_{\Gamma_0}}(I) \;=\; \Lin_{\langle X\rangle}(I_\Gamma) \;=\;
\langle \Lin(\epsilon_\Gamma(g_1)),\, \dots,\, \Lin(\epsilon_\Gamma(g_s)) \rangle_K 
$$
Since the matrix $\LL_K(G_\Gamma)$ is the coefficient matrix of the latter $K$-vector space
with respect to the basis $\{ x_1, \dots, x_k \}$, its rank equals 
the dimension of that vector space
and the claimed equality follows. The inequality is a result of 
the inequality $\dim( R_{\M_{\Gamma_0}} ) \le \dim_K ( \Cot_{\M_{\Gamma_0}/I}(R) )$ (see
for instance~\cite[V.4.5]{Kun}).

Claim~(b) follows from~(a), since the point $\Gamma_0$ is a singular point of~$\Spec(R)$
if and only if $\dim( R_{\M_{\Gamma_0}} ) < \dim_K ( \Cot_{\M_{\Gamma_0}/I}(R) )$. This is equivalent to
$d_{\Gamma_0} < n - \rk(\LL_K(G_\Gamma))$, i.e., to $\rk(\LL_K(G_\Gamma)) < n - d_{\Gamma_0}$.

To prove~(c), we may assume that $K$ is algebraically closed. 
For every $\Gamma\in K^m$, we have 
$\rk(\LL_A(G_\Gamma)) \le \rk(\LL_A(G)) < n - d \le n - d_{\Gamma_0}$,
and this implies the claim by~(b).

Now we show~(d). Again we may assume that~$K$ is algebraically closed.
For every $\Gamma \in \mathcal{Z}(J_{n-d} )$, we have
$\rk(\LL_A(G_\Gamma)) < \rk(\LL_A(G)) = n - d \le n - d_{\Gamma_0}$,
and therefore $\Gamma\in \Sing_0(\Theta)$.

Finally, we prove~(e). To show the first claim, we may assume that~$K$ is algebraically closed.
Suppose that $\rk(\LL_A(G)) > n-d$. Then there exists a point $\Gamma \in K^m$ such that
$\rk(\LL_K(G_\Gamma)) > n-d$. By~(d), it follows that $d_{\Gamma_0} < d$. Hence every
irreducible component of~$\Spec(R)$ which contains $\Gamma_0$ has dimension $<d$,
in contradiction to the hypothesis.

It remains to show the second claim in~(e).
As $\Spec(R)$ is assumed to be equidimensional of dimension~$d$,
we have $\Sing_0(\Theta) = \{\Gamma \in K^m \mid \rk(\LL_K(G_\Gamma)) < n-d \}$ by~(b),
and this set of points is defined by~$J_{n-d}$.
\end{proof}

If $\Spec(R)$ is equidimensional, part~(e) of this theorem allows us to compute $\Sing_0(\Theta)$.
The following corollary extends this to an algorithms for computing $\Sing_0(\Theta)$
in general.

\begin{corollary}{\bf (Computing the Set of Zero Section Singularities)}\label{cor-CompSing0}$\mathstrut$\\
Let $K$ be a perfect field, and let $R=P/I$ be a positive $A$-algebra as above. Then the following sequence of instructions
defines an algorithm which computes the vanishing ideal of $\Sing_0(\Theta)$ in~$A$.
\begin{enumerate}
\item[(1)] Compute the support~$\mathfrak{n}$ of the nilradical of~$R$ using $\mathfrak{n} =
\Ann_P(\Rad(I)/I)$.

\item[(2)] Compute the homogeneous prime ideals $\mathfrak{p}_1, \dots, \mathfrak{p}_\ell$ in~$P$ 
which represent the minimal primes of~$R$. 

\item[(3)] For $i=1,\dots,\ell$, compute $d_i=\dim(R/\mathfrak{p}_i)$. Then use part~(b) of the theorem 
to compute an ideal $\mathfrak{a}_i$ in~$A$ which represents the part of $\Sing_0(\Theta)$ 
corresponding to $\mathcal{Z}(\mathfrak{p}_i)$.

\item[(4)] Compute the intersection $\mathfrak{q} = \mathfrak{n} \;\cap\; 
\bigcap_{i\ne j} (\mathfrak{p_i} + \mathfrak{p}_j)$.

\item[(5)] Set $x_1 = \cdots = x_k = 0$ in the generators of~$\mathfrak{q}$, and
let $\mathfrak{b}$ be the resulting ideal in~$A$.

\item[(6)] Compute $\mathfrak{a}_1 \cap \cdots \cap \mathfrak{a}_\ell \cap \mathfrak{b}$
and return the result.
\end{enumerate}
\end{corollary}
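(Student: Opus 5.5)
The plan is to show that the singular locus of $\Spec(R)$, restricted to the zero section $Z_R\cong\AA^m_K$, splits into three parts, and that the algorithm computes exactly these parts. Write $\mathfrak{p}_1,\dots,\mathfrak{p}_\ell$ for the minimal primes of~$I$; they are homogeneous because~$I$ is $W$-homogeneous. A closed point $\Gamma_0$ is singular in $\Spec(R)$ precisely in one of the following cases:
\begin{enumerate}
\item[(i)] $R$ is not reduced at $\Gamma_0$;
\item[(ii)] $\Gamma_0$ lies on two distinct irreducible components;
\item[(iii)] $\Gamma_0$ lies on exactly one component $\mathcal{Z}(\mathfrak{p}_i)$, $R$ is reduced there, and $\Gamma_0$ is a singular point of $\Spec(P/\mathfrak{p}_i)$.
\end{enumerate}
Case (iii) uses that if $R_{\Gamma_0}$ is reduced with a unique minimal prime, then $R_{\Gamma_0}=(P/\mathfrak{p}_i)_{\M_{\Gamma_0}}$. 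Conversely, a regular local ring is a domain, which gives the other directions. Perfectness of~$K$ is used here so that radicals, annihilators and primary decompositions are computable, and so that "regular" and "smooth" agree.

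\emph{Steps (1), (4), (5).} The nonreduced locus of $\Spec(R)$ is the support of the $P$-module $\Rad(I)/I$, which is $\mathcal{Z}(\Ann_P(\Rad(I)/I))=\mathcal{Z}(\mathfrak{n})$. The locus of points on two components is $\bigcup_{i\ne j}\mathcal{Z}(\mathfrak{p}_i+\mathfrak{p}_j)$. Hence the union of cases (i) and (ii) is $\mathcal{Z}(\mathfrak{q})$. To intersect with the zero section I restrict along~$\zeta$, i.e.\ substitute $x_1=\cdots=x_k=0$. Since $\zeta$ corresponds to $P\to P/\langle X\rangle\cong A$, the image of~$\mathfrak{q}$ in~$A$ cuts out exactly $\zeta^{-1}(\mathcal{Z}(\mathfrak{q}))$. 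So $\mathfrak{b}$ describes this part of $\Sing_0(\Theta)$.

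\emph{Step (3).} Each $P/\mathfrak{p}_i$ is again graded and $W$-homogeneous with $w_j>0$, but $\mathfrak{p}_i\cap A$ may be nonzero, so $P/\mathfrak{p}_i$ need not be a positive $A$-algebra. I would handle this in one of two ways:
\begin{itemize}
\item replace $A$ by $A/(\mathfrak{p}_i\cap A)$, or
\item note directly that the proof of Theorem~\ref{thm-CharSing0}(a),(b) uses only $\mathfrak{p}_i\subseteq\langle X\rangle+(\mathfrak{p}_i\cap A)$ and the description of linear parts at $\M_{\Gamma_0}$.
\end{itemize}
For an irreducible $\Spec(P/\mathfrak{p}_i)$ of dimension~$d_i$, which is equidimensional, part (e) (suitably adapted) gives the singular points on the zero section. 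One takes the $(n-d_i)$-minors of the Jacobian of generators of~$\mathfrak{p}_i$ evaluated at $x=0$, together with the $A$-part $\mathfrak{p}_i\cap A$. This yields the ideal~$\mathfrak{a}_i$, and $\mathcal{Z}(\mathfrak{a}_i)$ is the set of $\Gamma$ with $\Gamma_0\in\mathcal{Z}(\mathfrak{p}_i)$ singular on that component. Points of $\mathcal{Z}(\mathfrak{a}_i)$ that lie on several components, or where $R$ is nonreduced, are already covered by~$\mathfrak{b}$.

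\emph{Step (6).} Take the union of the zero sets. Since $\mathcal{Z}(\bigcap \mathfrak{c}_j)=\bigcup\mathcal{Z}(\mathfrak{c}_j)$, the intersection $\mathfrak{a}_1\cap\cdots\cap\mathfrak{a}_\ell\cap\mathfrak{b}$ defines $\Sing_0(\Theta)$. Taking the radical if necessary gives its vanishing ideal.

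The main obstacle is step~(3): adapting Theorem~\ref{thm-CharSing0} to components $\mathfrak{p}_i$ with $\mathfrak{p}_i\cap A\neq 0$. Here the zero-section point must first be checked to lie on the component, and the $A$-linear coefficient matrix must be supplemented by the derivatives with respect to the $a$-variables of the generators of $\mathfrak{p}_i\cap A$. I would first try passing to the base $A/(\mathfrak{p}_i\cap A)$. If that base is singular, I would instead use the full Jacobian criterion at $\M_{\Gamma_0}$, whose rank splits block-wise because elements of $\mathfrak{p}_i\cap A$ have no $X$-linear terms.
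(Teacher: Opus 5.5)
Your proposal follows the paper's proof. Both split $\Sing(R)$ into the non-reduced locus $\mathcal{Z}(\mathfrak{n})$, the pairwise intersections $\mathcal{Z}(\mathfrak{p}_i+\mathfrak{p}_j)$ and the singular loci of the individual components, restrict to the zero section by setting $x_1=\cdots=x_k=0$, and form the union by intersecting ideals; your extra care in step~(3) for components with $\mathfrak{p}_i\cap A\ne\{0\}$, where the paper simply invokes part~(b) of the theorem, is correct, and the case can even be bypassed: since $I\cap A=\{0\}$ and $A$ is a domain, some homogeneous minimal prime $\mathfrak{p}_j$ has $\mathfrak{p}_j\cap A=\{0\}$, hence $\mathfrak{p}_j\subseteq\langle X\rangle$ contains the whole zero section, so every zero-section point on a component meeting~$A$ lies on two components and is already in $\mathcal{Z}(\mathfrak{b})$, while for components with $\mathfrak{p}_j\cap A=\{0\}$ part~(e) applies verbatim.
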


\begin{proof}
By definition of $\Sing_0(\Theta)$, we have to intersect $\Sing(R)$ with the zero section
and map the result to~$\AA^m_K$. The set $\Sing(R)$ is the union of the support~$\mathfrak{n}$ of the nilradical,
as computed in step~(1), the pairwise intersections of the irreducible components of $\Spec(R)$,
as computed in step~(4), and the singularities of the individual irreducible components.
For the latter sets, we use part~(b) of the theorem to calculate the corresponding ideals
in~$A$ directly in step~(3). Finally, we take the union of $\mathcal{Z}(\mathfrak{n})$
and the pairwise intersections $\mathcal{Z}(\mathfrak{p}_i) \cap \mathcal{Z}(\mathfrak{p}_j)$
in step~(4), intersect it with the zero section, and map it to~$\AA^m_K$ in step~(5).
The union of the various parts of $\Sing_0(\Theta)$ is then calculated in the final step~(6).
\end{proof}

Clearly, additional knowledge about~$R$ allows us to simplify this algorithm.
For instance, if we know that $R$ is reduced, we can skip step~(1). 
The following examples illustrate various cases of the theorem.

\begin{example}\label{ex-cor(a)}
Let $A=\QQ[a, b]$, let $P = A[x, y]$ be graded by $W=(1, 1)$, 
let $I =\langle ax,\,  by^2 \rangle$, and let $R=P/I$.
Then the spectrum of~$R$ is equidimensional and has four irreducible
components of dimension $d=2$. 

The $A$-linear coefficient matrix of $G = (ax,\, by^2)$ is
$\LL_A(G)= (\begin{smallmatrix} a & 0 \\   0 & 0 \end{smallmatrix})$
Thus we have $\rk(\LL_A(G)) = 1 < n - d$, and Theorem~\ref{thm-CharSing0}.c
implies $\Sing_0(\Theta) = \AA^2_\QQ$. (This example is revisited in 
Example~\ref{ex-cor(a)-continued}.)
\end{example}

\begin{example}\label{ex-thm-c}
Let $A=\QQ[a]$, let $P = A[x, y, z]$ be graded by $W=(2, 2, 1)$, 
let $ I \;=\; \langle ax +z^2,\,  ay+z^2 \rangle \;=\; \langle a,\, z^2\rangle \cap 
\langle x-y,\, z^2\rangle$, and let $R=P/I$.
Then the spectrum of~$R$ is equidimensional and has two irreducible components 
of dimension $d=2$. The fibers of the
morphism $\Theta:\; \Spec(R) \longrightarrow \AA^1_\QQ$
are 1-dimensional for $a\ne 0$ and 2-dimensional for $a=0$.

Here the $A$-linear coefficient matrix of $G = (ax+z^2,\, ay+z^2)$ is 
$\LL_A(G)= (\begin{smallmatrix}  a & 0 & 0 \\   0 & a & 0\end{smallmatrix})$.
Since we have $\rk(\LL_A(G)) = 2 = n - d$, part~(e) of Theorem~\ref{thm-CharSing0}
shows $\Sing_0(\Theta)= \{0\}$.
\end{example}

\begin{example}\label{ex-cor-b}
Let $A = \QQ[a]$, let $P = A[x, y]$ be graded by $W=(1, 1)$, 
let $I =\langle ax, ay^2 \rangle$, and let $R=P/I$.
Then the spectrum of $R$ has two 
irreducible components: one is the zero section, defined scheme-theoretically
by $Z_R= \mathcal{Z}(\langle x,y^2\rangle)$, and the other one is the
fiber over $\Gamma=(0,0)$ which is a plane $\AA^2_\QQ$.

Here the $A$-linear coefficient matrix of $G = (ax,\, ay^2)$ is
$\LL_A(G)= (\begin{smallmatrix}  a & 0  \\   0 & 0 \end{smallmatrix})$.
Thus we have $\rk(\LL_A(G))= 1 = n - d$, and Theorem~\ref{thm-CharSing0}.d
implies that $\mathcal{Z}(J_1)) = \{ (0)\}$ is contained
in $\Sing_0(\Theta) = \AA^1_\QQ$. 

Notice that this is not an example of part~(e) of the theorem, as
$\Spec(R)$ is not equidimensional.
\end{example}

Finally, we include an example where $\rk(\LL_A(G)) > n-d$.

\begin{example} \label{rem-bigrk}
Let $A=\QQ[a]$, let $P = A[x,y]$ be graded by $W=(1, 1)$, let $I =\langle ax, ay \rangle$,
and let $R=P/I$.
Then the spectrum of~$R$ has two irreducible components, namely the 1-dimensional
zero section $\mathcal{Z}(x,y)$ and the 2-dimensional fiber $\mathcal{Z}(\langle a\rangle)$
over the origin.

Here the $A$-linear coefficient matrix of $G = (ax,\, ay)$ is
$\LL_A(G)= (\begin{smallmatrix}  a & 0 \\   0 & a \end{smallmatrix})$.
Thus we have $\rk(\LL_A(G)) = 2 > n - d$. Consequently, Theorem~\ref{thm-CharSing0}.e
implies that~$R$ is not equidimensional. In this case we have
$\Sing_0(\Theta) = \mathcal{Z}(J_2) = \{ (0)\}$.
\end{example}

Next, we study the problem of computing the set of fiber vertex singularities which was defined as
$$
\Sing_v(\Theta) \;=\; \{ \Gamma \in K^m \mid \ecod(R_{F_\Gamma}) > 0 \}
$$
Once again, we can use the $A$-linear coefficient matrix for this purpose.

\begin{theorem}{\bf (Characterization of Fiber Vertex Singularities) }\label{thm-CharSing_v}\\
Let $R=P/I$ be a positive $A$-algebra of dimension~$d$, let $\Theta:\; \Spec(R) \longrightarrow
\AA^m_K$ be its canonical morphism, let $G = (g_1,\dots, g_s)$
be a tuple of $W$-homogeneous polynomials which generate~$I$, and let $\LL_A(G)$ be 
the $A$-linear coefficient matrix of~$G$.  
\begin{enumerate}
\item[(a)]  For every $\Gamma\in K^m$, we have
$$
\rk(\LL_K(G_\Gamma)) \;=\; n - m  - \dim_K(\Cot_{\langle X\rangle / I_\Gamma}( K[X]/I_\Gamma ) )\;\le\; 
n - m -d_{F_\Gamma}
$$

\item[(b)] $\Sing_v(\Theta) = \{\Gamma \in K^m \mid  \rk(\LL_K(G_\Gamma))< n - m - d_{F_\Gamma} \}$
\smallskip

\item[(c)] $\Sing_0(\Theta) \setminus \Sing_v(\Theta) = \{\Gamma \in K^m\mid \rk(\LL_K(G_\Gamma)) 
= n - m - d_{F_\Gamma}  < n - d_{\Gamma_0} \}$

\end{enumerate}
\end{theorem}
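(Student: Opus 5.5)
The proof will follow the pattern of Theorem~\ref{thm-CharSing0}, with the zero point of $\Spec(R)$ replaced by the origin of the fiber~$F_\Gamma$. Note that $n-m=k$.

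For the equality in~(a), I will use the presentation of the cotangent space of the fiber at its origin. By Proposition~\ref{prop-indepOfGen}.c, or directly by~\cite[Prop.~1.8]{KLR1} applied in $K[X]$ at $\langle X\rangle$,
$$
\Cot_{\langle X\rangle/I_\Gamma}(K[X]/I_\Gamma)\;\cong\;\langle X\rangle_K/\Lin_{\langle X\rangle}(I_\Gamma).
$$
Its dimension is therefore $k-\dim_K(\Lin_{\langle X\rangle}(I_\Gamma))$. Remark~\ref{rem-Lin0Props}.c says that $\Lin_{\langle X\rangle}(I_\Gamma)$ is spanned by the linear parts of $\epsilon_\Gamma(g_1),\dots,\epsilon_\Gamma(g_s)$. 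The coefficient matrix of these linear parts with respect to $x_1,\dots,x_k$ is exactly $\LL_K(G_\Gamma)$. Hence $\dim_K(\Lin_{\langle X\rangle}(I_\Gamma))=\rk(\LL_K(G_\Gamma))$, and substituting $k=n-m$ gives the equality. The inequality follows because the dimension of a Noetherian local ring is bounded by its embedding dimension: here $d_{F_\Gamma}=\dim(R_{F_\Gamma})\le\edim(R_{F_\Gamma})$, where $\edim(R_{F_\Gamma})$ is the dimension of the cotangent space above (cf.~\cite[V.4.5]{Kun}). Alternatively, one can use equality~(1) of Proposition~\ref{prop-fundamental}.a together with $\ecod\ge 0$.

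For~(b), recall that $\Gamma\in\Sing_v(\Theta)$ means $\ecod(R_{F_\Gamma})>0$, that is, $d_{F_\Gamma}<\dim_K(\Cot_{\langle X\rangle/I_\Gamma}(K[X]/I_\Gamma))$. By~(a) the right-hand side equals $n-m-\rk(\LL_K(G_\Gamma))$. Rearranging gives $\rk(\LL_K(G_\Gamma))<n-m-d_{F_\Gamma}$.

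For~(c), I will combine~(b) with Theorem~\ref{thm-CharSing0}.b. First, by~(a) the inequality $\rk(\LL_K(G_\Gamma))\le n-m-d_{F_\Gamma}$ always holds. So $\Gamma\notin\Sing_v(\Theta)$ is equivalent to $\rk(\LL_K(G_\Gamma))=n-m-d_{F_\Gamma}$. Second, $\Gamma\in\Sing_0(\Theta)$ is equivalent to $\rk(\LL_K(G_\Gamma))<n-d_{\Gamma_0}$. Intersecting these two conditions yields the stated description.

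The only delicate point is bookkeeping. Theorem~\ref{thm-CharSing0}.a and part~(a) here compute the same number $\rk(\LL_K(G_\Gamma))$ from two different cotangent spaces. These must be the ambient ones: $\Cot_{\M_{\Gamma_0}}(R)$, of dimension $n-\rk$, and the fiber's cotangent space, of dimension $k-\rk$. The two differ by exactly the $m$ coordinates $a_i-\gamma_i$, consistent with equality~(2) of Proposition~\ref{prop-fundamental}.a. Once that is checked, everything is routine.
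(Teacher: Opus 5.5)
Your proposal is correct and follows the same route as the paper. The equality in (a) comes from presenting the fiber's cotangent space by the linear parts of the $\epsilon_\Gamma(g_i)$; the paper simply cites \cite[Prop.~1.5.b]{KLR1} for this. Part (b) is a rearrangement of (a), and part (c) combines (a) with the characterization of $\Sing_0(\Theta)$ from Theorem~\ref{thm-CharSing0}.

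Your justification of the inequality in (a) via $d_{F_\Gamma}=\dim(R_{F_\Gamma})\le\edim(R_{F_\Gamma})$ is the appropriate one. The paper's text instead invokes the bound for $R_{\M_{\Gamma_0}}$, which on its own only yields $\rk(\LL_K(G_\Gamma))\le n-d_{\Gamma_0}$, a weaker statement.
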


\begin{proof} Let us prove (a). The equality follows from \cite[Prop.~1.5.b]{KLR1}.
The inequality follows from the inequality 
$\dim( R_{\M_{\Gamma_0}} ) \le \dim_K ( \Cot_{\M_{\Gamma_0}/I}(R) )$ (see
for instance~\cite[V.4.5]{Kun}). 
Claim (b) follows directly from (a). Claim (c) follows from (a) and Theorem~\ref{thm-CharSing0}.a.
\end{proof}

Part~(b) of this theorem can be turned into an algorithm for computing $\Sing_v(\Theta)$. However,
since $\Sing_v(\Theta)$ is in general only a constructible subset of $\AA^m_K$, we have to describe
it in the form $\mathcal{Z}(I_1) \setminus \mathcal{Z}(I_2)$. For the following algorithm, we assume that
the readers are familiar with standard operations for constructible sets such as unions, intersections,
and complements.
Moreover, note that we are using the algorithm for computing a comprehensive Gr\"obner basis
of~$I$ which was introduced in the fundamental paper~\cite{Wei} on this topic. 
There the set of pairs $(G_i,C_i)$ corresponding to a comprehensive Gr\"obner basis
of~$I$ is called a {\bf Gr\"obner system}.

\begin{corollary}{\bf (Computing the Set of Vertex Singularities)}\label{cor-CompSingV}$\mathstrut$\\
Let $K$ be a perfect field and $R=P/I$ a positive $A$-algebra as above.
Then the following sequence of instructions defines an algorithm which computes $\Sing_v(\Theta)$.
\begin{enumerate}
\item[(1)] Compute a comprehensive Gr\"obner basis of~$I$, considered as an ideal in $A[x_1,\dots,x_k]$
with parameters in~$A$ and the indeterminates $\{x_1,\dots,x_k\}$. As a result, get constructible sets $C_i$
as well as Gr\"obner bases $G_i$ for~$I$ over~$C_i$.

\item[(2)] For every~$i$, compute the Krull dimension~$\delta_i$ of $K[x_1,\dots,x_k] / \langle \sigma(G_i)\rangle$
where $\sigma(G_i)$ is the specialized Gr\"obner basis under the conditions defined by~$C_i$.

\item[(3)] For $d=0,1,2,\dots$ compute the constructible sets $S_d = \bigcup_{\{i\mid \delta_i\ge d\}}\, C_i$
until $S_d = \emptyset$.

\item[(4)] For $d=0,1,\dots,k-1$, use the ideals generated by the minors of $\LL_A(G)$ 
to compute the closed subsets~$T_d$ of~$\AA^m_K$ defining all $\Gamma\in K^m$ 
such that $\rk(\LL_K(G_\Gamma)) < k-d$.

\item[(5)] Compute the union of the constructible sets $T_d \cap (S_d \setminus S_{d+1})$
for $d=0,\dots,k-1$. Return the result.

\end{enumerate}
\end{corollary}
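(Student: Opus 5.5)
The proof will be a correctness check for the algorithm, resting on Theorem~\ref{thm-CharSing_v}.b. By that result, $\Gamma\in\Sing_v(\Theta)$ if and only if $\rk(\LL_K(G_\Gamma)) < k - d_{F_\Gamma}$; here I read $n-m$ as $k$, the number of variables of the fiber ring $K[X]$. So it suffices to establish two things: steps (1)--(3) produce a stratification of $\AA^m_K$ by fiber dimension, and steps (4)--(5) select, on each stratum, exactly the points where the rank condition holds. Termination is clear: the Gr\"obner system is finite, and every $\delta_i\le k$, so $S_{k+1}=\emptyset$.

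For the first part, I will use the defining property of a Gr\"obner system from~\cite{Wei}. The sets $C_i$ cover $K^m$. For every $\Gamma\in C_i$, the specialization $\sigma_\Gamma(G_i)$ is a Gr\"obner basis of the fiber ideal $I_\Gamma=\epsilon_\Gamma(I)$. The Krull dimension of $K[X]/I_\Gamma$ is determined by the leading term ideal $\langle \mathrm{LT}(\sigma_\Gamma(G_i))\rangle$. This ideal is the same for all $\Gamma\in C_i$, since the conditions defining $C_i$ fix which leading coefficients vanish. Hence $\delta_i=\dim(K[X]/I_\Gamma)$ for all $\Gamma\in C_i$, and by Remark~\ref{rem-affine} this equals $d_{F_\Gamma}$.

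I will refine the Gr\"obner system so that the $C_i$ are pairwise disjoint, which is the standard output. Then $S_d=\{\Gamma\mid d_{F_\Gamma}\ge d\}$, so $S_d\setminus S_{d+1}=\{\Gamma\mid d_{F_\Gamma}=d\}$. All of these sets are constructible, being finite unions and differences of the constructible sets~$C_i$.

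For the second part, I will show that $T_d=\mathcal{Z}(J_{k-d})$, where $J_{k-d}$ is the ideal of $(k-d)$-minors of $\LL_A(G)$. The entries of $\LL_K(G_\Gamma)$ are $\epsilon_\Gamma$ of the entries of $\LL_A(G)$. Evaluation commutes with taking minors, so $\rk(\LL_K(G_\Gamma))<k-d$ holds exactly when all $(k-d)$-minors vanish at~$\Gamma$. Therefore $T_d$ is closed and is exactly the set described in step~(4). The union in step~(5) then equals $\{\Gamma\mid \rk(\LL_K(G_\Gamma))<k-d_{F_\Gamma}\}$, which is $\Sing_v(\Theta)$ by Theorem~\ref{thm-CharSing_v}.b.

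The case $d=k$ can be omitted. If $d_{F_\Gamma}=k$, then $k-d_{F_\Gamma}=0$, and the strict rank condition cannot hold; indeed $I_\Gamma=0$ and the fiber is $\AA^k_K$, which is regular. The perfectness of~$K$, together with the convention from the introduction of working with $K$-rational or algebraically closed points, guarantees that the fiber dimension computed over~$K$ agrees with the geometric one.

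The main obstacle is the first part, namely justifying that $\delta_i$ is constant on~$C_i$. This requires that specialization of the comprehensive Gr\"obner basis commutes with forming $I_\Gamma$ and preserves the leading terms used for the dimension count. I would cite this directly from Weispfenning's theorem and note that the homogeneity of~$G$ is not even needed there.
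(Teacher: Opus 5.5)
Your proof is correct and follows the same route as the paper: steps (1)--(3) stratify $\AA^m_K$ by the fiber dimension $d_{F_\Gamma}$, and on each stratum the minors of $\LL_A(G)$ cut out exactly the points satisfying the rank condition of Theorem~\ref{thm-CharSing_v}.b (with $n-m=k$). Your extra details (the leading term ideal being constant on each $C_i$, evaluation commuting with minors, the vacuous case $d=k$) only spell out what the paper's short proof takes for granted. Two of your side remarks are unnecessary, though harmless. Disjointness of the $C_i$ is not needed, since overlapping $C_i$ assign the same $\delta_i$ to a common point. Perfectness of $K$ is also not what makes the fiber dimension agree with the geometric one, since Krull dimension is invariant under base field extension anyway.
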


\begin{proof}
The sets~$S_d$ are the constructible sets defining all points $\Gamma\in K^m$
such that the coordinate ring $K[x_1,\dots,x_k]/\langle G_\Gamma\rangle$ of the fiber has at least
dimension~$d$. Thus $S_d\setminus S_{d+1}$ is the constructible set over which the dimension of the
fiber is exactly~$d$. In step~(4) we compute for each~$d$ the closed subset of~$\AA^m_K$
where the inequality of part~$(b)$ of the theorem is satisfied, i.e., the subset of $\Sing_v(\Theta)$
corresponding to a fixed fiber dimension~$d$. Altogether, it follows that step~(5) constructs
the entire set $\Sing_v(\Theta)$. 
\end{proof}

An extension of this algorithm allows us to compute the set $\Sing_s(\Theta)$, as well.

\begin{corollary}{\bf (Computing the Set of Singular Fibers)}\label{cor-CompSingS}$\mathstrut$\\
Let $K$ be a perfect field and $R=P/I$ a positive $A$-algebra as above.
Then the following sequence of instructions defines an algorithm which computes $\Sing_s(\Theta)$.
\begin{enumerate}
\item[(1)] For $d=0,\dots,k-1$, use Corollary~\ref{cor-CompSingV} to compute
the constructible set $U_d = T_d \cap (S_d \setminus S_{d+1})$.

\item[(2)] For $d\in \{ 0,\dots,k-1\}$ such that $U_d \ne 0$, perform the following two steps.

\item[(3)] Compute a comprehensive Gr\"obner basis of 
$I + \Jac_{k-d}(I)$, where $\Jac_{k-d}(I)$ is the ideal generated by the
minors of order~$k-d$ of the relative Jacobian matrix of~$I$, considered as an ideal in $A[x_1,\dots,x_k]$
with parameters in~$A$ and the indeterminates $\{x_1,\dots,x_k\}$.
As a result, get constructible sets $C_{di}$ as well as Gr\"obner bases $G_{di}$
for $I+\Jac_d(I)$ over~$C_{di}$.
 
\item[(4)] For every~$i$, compute the Krull dimension~$\delta_{di}$ of $K[x_1,\dots,x_k] / \langle \sigma(G_{di})\rangle$
where $\sigma(G_{di})$ is the specialized Gr\"obner basis under the conditions defined by~$C_{di}$.

\item[(5)] Compute the constructible set $\bigcup_{\{d \mid U_d\ne 0\}} \bigcup_{\{i\mid \delta_{di}\ge 1\}}\, C_{di}$
and return it.

\end{enumerate}
\end{corollary}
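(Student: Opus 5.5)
The proof follows the pattern of Corollary~\ref{cor-CompSingV}. By definition, $\Gamma\in\Sing_s(\Theta)$ means that $\Sing(F_\Gamma)$ strictly contains the origin. Each fiber $F_\Gamma$ is $W$-homogeneous, so its singular locus is a $W$-homogeneous closed subset of $\AA^k_K$. By Proposition~\ref{prop-Rconnected}.a, every point of this subset is joined to the origin by a curve $C_{\Pi,\Gamma_0}$ lying inside it. Two consequences follow. First, $\Sing(F_\Gamma)\supsetneq\{0\}$ if and only if $\dim\Sing(F_\Gamma)\ge 1$. Second, whenever $\Sing(F_\Gamma)\ne\emptyset$, it contains the origin, so $\Gamma\in\Sing_v(\Theta)$. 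The second point is the reason the algorithm only needs to examine the strata $U_d$. I would therefore begin by checking that the constructible sets $U_d$ from step~(1) partition $\Sing_v(\Theta)$ according to the fiber dimension $d=d_{F_\Gamma}$. This is precisely the content of the proof of Corollary~\ref{cor-CompSingV}, combined with Remark~\ref{rem-affine}, which identifies $d_{F_\Gamma}$ with $\dim(F_\Gamma)$. Consequently
$$
\Sing_s(\Theta)=\bigcup_{d}\{\Gamma\in U_d \mid \dim\Sing(F_\Gamma)\ge 1\}.
$$

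Next, fix $d$ and $\Gamma\in U_d$. The key step is to show that, on such $\Gamma$, the Jacobian criterion describes $\Sing(F_\Gamma)$ as $\mathcal{Z}(I_\Gamma+\Jac_{k-d}(I_\Gamma))$. The relative Jacobian matrix (derivatives with respect to $x_1,\dots,x_k$ only) commutes with the specialization $\epsilon_\Gamma$, and so do its minors. Hence $\epsilon_\Gamma(I+\Jac_{k-d}(I)) = I_\Gamma+\Jac_{k-d}(I_\Gamma)$, and the comprehensive Gr\"obner basis in step~(3) correctly specializes on every cell $C_{di}$. The Krull dimension $\delta_{di}$ computed in step~(4) is then the dimension of this locus for all $\Gamma\in C_{di}$. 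Collecting the cells with $\delta_{di}\ge1$ and intersecting with $U_d$ yields the $d$-th piece. Step~(5) should therefore really be read as a union of sets $U_d\cap C_{di}$, and I would make that intersection explicit in the argument.

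The main obstacle is the validity of the Jacobian criterion when $F_\Gamma$ is not equidimensional. In that case, points on lower-dimensional components, or points lying on embedded components, are misdetected by the minors of order $k-d$. I would handle this the same way Corollary~\ref{cor-CompSing0} does. For the fiber ideal, I would add the nilradical support and the pairwise intersections of components, which is possible because $K$ is perfect. Alternatively, I would argue that $\mathcal{Z}(I_\Gamma+\Jac_{k-d}(I_\Gamma))$ always contains $\Sing(F_\Gamma)$, and agrees with it up to the lower-dimensional components, which themselves contain singular points of dimension at least one unless they are the origin. Making this precise, uniformly over each constructible cell, is the step I expect to require the most care. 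Finiteness and termination are then immediate, since only finitely many cells and values of $d$ occur.
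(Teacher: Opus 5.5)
Your plan follows the paper's proof. You use the strata $U_d$ from Corollary~\ref{cor-CompSingV} as the part of $\Sing_v(\Theta)$ with fiber dimension~$d$. You specialize the ideal $I+\Jac_{k-d}(I)$ over the cells of a comprehensive Gr\"obner system. And you use weighted homogeneity of $\Sing(F_\Gamma)$ to turn ``the origin is not the only singularity'' into $\delta_{di}\ge 1$. Your additional remarks are correct and sharpen that proof:
\begin{itemize}
\item a nonempty $W$-homogeneous closed set contains the origin, so $\Sing_s(\Theta)\subseteq\Sing_v(\Theta)$;
\item $\epsilon_\Gamma$ commutes with the minors of the relative Jacobian matrix;
\item the cells $C_{di}$ cover all of $K^m$, so step~(5) only makes sense after intersecting with $U_d$.
\end{itemize}

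The gap is the step you single out yourself: $\mathcal{Z}(I_\Gamma+\Jac_{k-d}(I_\Gamma))=\Sing(F_\Gamma)$ for $\Gamma\in U_d$. The paper merely asserts it, and your proposal leaves it open. Your second suggested route also has the inclusion reversed. If $p\in F_\Gamma$ is a regular point of local dimension $d_p\le d$, the Jacobian matrix at~$p$ has rank $k-d_p\ge k-d$, so $p\notin\mathcal{Z}(\Jac_{k-d}(I_\Gamma))$. Thus $\mathcal{Z}(I_\Gamma+\Jac_{k-d}(I_\Gamma))\subseteq\Sing(F_\Gamma)$ always, with equality at points on $d$-dimensional components. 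What can be lost are singular points lying only on components of dimension $<d$. Embedded components alone cause no trouble, since they do not change $d_p$.

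The loss really occurs. Take $A=K[a]$, $W=(1,1,1)$ and $I=\langle x_1^2x_3,\,x_2x_3\rangle=\langle x_3\rangle\cap\langle x_1^2,x_2\rangle$.
\begin{itemize}
\item Every fiber is a plane together with a double line, of dimension~$2$ with a singular origin. Hence $U_2=K$ and $U_0=U_1=\emptyset$.
\item But $I+\Jac_1(I)=\langle x_1^2,x_2,x_3\rangle$, so all $\delta_{2i}=0$ and the algorithm returns $\emptyset$.
\item Yet every point of the $x_3$-axis is singular on $F_\Gamma$: its local dimension is~$1$ and its Jacobian rank is~$1$. So $\Sing_s(\Theta)=\AA^1_K$.
\end{itemize}
Hence the statement cannot be proved as written. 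Your argument becomes complete under the extra hypothesis that the fibers over each $U_d$ are equidimensional. Otherwise the algorithm itself must be changed, along the lines of your first route: treat the minimal primes of $I_\Gamma$, their pairwise intersections and the nilradical support separately, as in Corollary~\ref{cor-CompSing0}. That requires a decomposition which is uniform over each constructible cell, and neither your proposal nor the paper supplies it.
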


\begin{proof}
As shown in the proof of Corollary~\ref{cor-CompSingV}, the set $U_d$ is the set of all
points $\Gamma\in K^m$ such that the fiber $F_\Gamma$ of~$\Theta$ has dimension~$d$
and a singularity at the origin. The ideal $I+\Jac_{k-d}(I)$ computes the
singular locus of the coordinate rings $R_{F_\Gamma}$ of these fibers. For all 
specializations in~$C_{di}$, the corresponding rings $R_{F_\Gamma}$ have a singular
locus of dimension~$\delta_{di}$. 
Notice that, since $I_\Gamma$ is $W$-homogeneous, also the singular locus of~$R_{F_\Gamma}$
is weighted homogeneous. Hence step~(5) correctly computes the constructible set
of all $\Gamma\in K^m$ for which the origin is not the only singularity of~$R_{F_\Gamma}$.
\end{proof}

Let us return to the setting of Example~\ref{ex-cor(a)} and examine $\Sing_v(\Theta)$ and $\Sing_s(\Theta)$.

\begin{example}\label{ex-cor(a)-continued}
As in Example~\ref{ex-cor(a)}, we let $A=\QQ[a, b]$, let $P = A[x, y]$ be graded by $W=(1, 1)$, 
let $I =\langle ax,  by^2 \rangle$, and let $R=P/I$. In particular, we have $n=4$ and $m=2$.
The spectrum of~$R$ is equidimensional and has four irreducible
components of dimension $d=2$. 

The fibers of the canonical morphism $\Theta:\; \Spec(R) \longrightarrow \AA^2_\QQ$ 
are 0-dimensional over $U = \{ (a,b) \mid a\ne 0,\; b\ne 0 \}$.
They are 1-dimensional for $V_1 \cup V_2$ where $V_2 = \{(a,b) \mid a=0, b\ne 0\}$
and $V_2 = \{(a,b) \mid a\ne 0, b =0\}$. Finally, the fiber over~$\Gamma$ is 2-dimensional for $\Gamma = (0, 0)$.

The $A$-linear coefficient matrix of $G = (ax,\, by^2)$ is
$\LL_A(G)= (\begin{smallmatrix}a & 0 \\  0 & 0 \end{smallmatrix})$.
Depending on the choice of~$\Gamma$ in~$K^m$, there are four different cases.

\begin{enumerate}
\item[{\it Case 1:}]  $\Gamma \in U$. In this case we have
$\rk(\LL_K(G_\Gamma)) = 1$ and fiber dimension $d_{F_\Gamma} = 0$.
Since $1= \rk(\LL_A(G_\Gamma)) < n - m -d_{F_\Gamma} =2$, the ring $R_{F_\Gamma}$
has a singular point at the origin. In fact, we have $R_{F_\Gamma} \cong
K[y] / \langle y^2 \rangle$. As the fiber consists of only one point,
we have $\Gamma\notin \Sing_s(\Theta)$.

\smallskip
\item[{\it Case 2:}] $\Gamma \in V_1$. In this case we have
$\rk(\LL_A(G_\Gamma)) = 0$ and $d_{F_\Gamma} =1$.
Since $0 = \rk(\LL_A(G_\Gamma)) < n - m -d_{F_\Gamma} =1$, 
the ring $R_{F_\Gamma}$ has a singular point at the origin.
In fact, we have  $R_{F_\Gamma} \cong K[x, y]/\langle y^2\rangle$,
and thus $\Gamma\in \Sing_s(\Theta)$.

\smallskip
\item[{\it Case 3:}] $\Gamma \in  V_2$. In this case we have 
$\rk(\LL_A(G_\Gamma)) = 1$ and $d_{F_\Gamma} =1$.
Since $1= \rk(\LL_A(G_\Gamma)) = n - m -d_{F_\Gamma} =1$,
the ring $R_{F_\Gamma}$ has a regular point at the origin, and consequently
it is regular. In fact, we have $R_{F_\Gamma} \cong K[y]$.

\smallskip
\item[{\it Case 4:}] $\Gamma =(0,0)$. In this case we have
$\rk(\LL_A(G_\Gamma)) = 0$ and $d_{F_\Gamma} =2$.
Since $0 = \rk(\LL_K(G_\Gamma)) = n - m -d_{F_\Gamma} =0$, 
the ring $R_{F_\Gamma}$ is regular. In fact, we have
$R_{F_\Gamma} \cong K[x, y]$.
\end{enumerate}

Altogether, we find that $\Sing_v(\Theta) = \{(a, b) \in \QQ^2 \mid 
b \ne 0 \}$. As we have seen in Example~\ref{ex-cor(a)} that $\Sing_0(\Theta) = \AA^2_\QQ$,
this example is a case where the inclusion $\Sing_v(\Theta) \subseteq \Sing_0(\Theta)$
shown in Corollary~\ref{cor-singsing} is strict. Moreover, we also find
that $\Sing_s(\Theta) = V_1 = \{ (0,b)\in \QQ^2 \mid b\ne 0\}$, and therefore also
the inclusion $\Sing_s(\Theta) \subseteq \Sing_v(\Theta)$ is strict. 
\end{example}

\bigbreak
%%%%%%%%%%%%%%%%%%%%%%%%%%%%%%%%%%%%%%%%%%%%%%%%%%%%%%%%%%%
%
% Section 7: Application to MaxDeg Border Basis Schemes
%
%%%%%%%%%%%%%%%%%%%%%%%%%%%%%%%%%%%%%%%%%%%%%%%%%%%%%%%%%%%

\section{Application to MaxDeg Border Basis Schemes}
\label{sec-ApplBBS}

In this section we apply the above results and algorithms to 
the coordinate rings of certain border basis schemes. Let us begin by recalling
the necessary definitions. For further details, see for instance~\cite{KR3}, \cite{KR4},
and~\cite{KR6}.

Let $P=K[x_1,\dots,x_n]$ be a polynomial ring over a field~$K$.
A non-empty subset $\OO =  \{t_1,\dots,t_\mu\}$ of the set of terms
$\TT^n = \{ x_1^{\alpha_1} \cdots x_n^{\alpha_n} \mid \alpha_i \ge 0\}$
is called an {\bf order ideal} if $t\in \OO$ and $t'\mid t$ implies
$t'\in \OO$, i.e., if~$\OO$ is a poset ideal in the set~$\TT^n$
which is partially ordered by divisibility.
Then the set
$$
\partial \OO \;=\; (x_1 \OO \cup \cdots \cup x_n\OO) \setminus \OO \;=\; \{b_1,\dots,b_\nu\}
$$
is called the {\bf border} of~$\OO$.

Next we introduce new indeterminates $c_{ij}$ for $i=1,\dots,\mu$ and $j=1,\dots,\nu$
and define the polynomials $g_j = b_j - \sum_{i=1}^\mu c_{ij} t_i$ in $P[C]$, where
$C=\{c_{ij} \mid i=1,\dots,\mu;\; j=1,\dots,\nu\}$.
The set $G = \{ g_1,\dots,g_\nu\}$ is called the {\bf generic $\OO$-border prebasis}.
The residue classes of the terms in~$\OO$ generate $P[C]/\langle G\rangle$ as a $K[C]$-module.
Thus, for $r=1,\dots,n$, we define the {\bf generic multiplication matrices}
$\mathcal{A}_r = (a_{ij}^{(r)}) \in \Mat_\mu(K[C])$
whose entries are given by
$$
a_{ij}^{(r)} \;=\; \begin{cases}
\delta_{im} & \hbox{\rm if\ } x_r t_j = t_m, \\
c_{im} & \hbox{\rm if\ } x_r t_j = b_m.
\end{cases}
$$

\begin{definition}{\bf (Border Basis Schemes)}\label{def-BBS}$\mathstrut$\\
In the above setting, let $I(\BO) \subseteq K[C]$ be the ideal generated
by the entries of the commutator matrices $\mathcal{A}_i \mathcal{A}_j - \mathcal{A}_j
\mathcal{A}_i$, where $1\le i < j \le n$. Then the subscheme of $\mathbb{A}^{\mu \nu}$
defined by $I(\BO)$ is called the {\bf $\OO$-border basis scheme} and is denoted by~$\BO$.
Its coordinate ring is denoted by $B_\OO = K[C] / I(\BO)$.
\end{definition}

In general, the rings $B_\OO$ are $\ZZ$-graded by the {\bf total arrow degree}
which is defined by the tuple of weights~$W$ such that
$\deg_W(c_{ij}) = \deg(b_j)- \deg(t_i)$ for $i=1,\dots,\mu$ and $j=1,\dots,\nu$.
For our purposes, the most interesting situation occurs when this grading is
non-negative, and we give it the following name.

\begin{definition}{\bf (MaxDeg Border Basis Schemes)}\label{def-MaxDeg}$\mathstrut$\\
Let $\OO=\{t_1,\dots,t_\mu\}$ be an order ideal with border $\partial\OO = \{b_1,\dots,b_\nu\}$.
\begin{enumerate}
\item[(a)] The order ideal $\OO$ is called a {\bf MaxDeg order ideal} of $\deg(b_j) \ge \deg(t_i)$
for all $i=1\dots,\mu$ and all $j=1,\dots,\nu$. In other words, this means that the total arrow grading
on $K[C]$ is non-negative.

\item[(b)] If $\OO$ is a MaxDeg order ideal, then the $\OO$-border basis scheme $\BO$
is called a {\bf MaxDeg border basis scheme}.

\end{enumerate}
\end{definition}

In the following we assume that $\OO$ is a MaxDeg order ideal. Consequently, the ring $B_\OO$
is non-negatively graded. To connect this setting to the previous sections, we denote
the set of indeterminates~$c_{ij}$ of total arrow degree zero~by $C_0$, 
and by $C_+$ we denote the set of indeterminates~$c_{ij}$ of positive total arrow degree. 
Thus we have $A=K[C_0]$. By~\cite[Thm.~5.3]{KR3}, we know that 
$I(\BO) \cap A=\{0\}$, and hence $B_\OO = A[C_+]/I(\BO)$ is a positive $A$-algebra.

The next remark collects some basic properties of MaxDeg border basis schemes.

\begin{remark}\label{rem-MaxDegBBS}
Let $\OO$ be a MaxDeg order ideal as above.
\begin{enumerate}
\item[(a)] By Proposition~\ref{prop-Rconnected}.c, the $\OO$-border basis scheme is connected.
Let us discuss this insight briefly. We recall from~\cite[Thm. 2.7 and Rmk. 2.8]{KR4} 
that, for every order ideal $\OO \subset \mathbb T^n$, the scheme $\BO$ is a dense open 
subscheme of $\Hilb^\mu(\AA_K^n)$. By a famous result of R.\ Hartshorne, these Hilbert schemes
are always connected. However, it is not known whether their open subschemes $\BO$
are connected in general.

It is also known that $\Hilb^\mu(\AA_K^n)$ is irreducible for any $n$ 
and $\mu\le 7$ (see~\cite{Maz} and  the introduction of~\cite{DJNT}).
Thus $\OO$-border basis schemes are connected in these cases.

\item[(b)] If $n=2$ and $K$ is a perfect field, i.e., for planar MaxDeg $\OO$-border basis schemes
defined over a perfect field, it was shown in~\cite[Cor. 3.6]{KR6} that $\BO$ is an affine cell.
More precisely, there exists a homogeneous $K$-algebra isomorphism $B_\OO \cong K[Y]$,
where $Y$ is a tuple of $\dim(\BO)$ indeterminates.
In particular, we have $\Sing_0(\Theta) = \Sing_v(\Theta) = \Sing_s(\Theta) = \emptyset$,
since all fibers of the canonical morphism~$\Theta$ are affine spaces.
\end{enumerate}
\end{remark}

So, to find interesting examples of non-negatively graded rings $B_\OO$,
we start by looking at examples with $n = 3$, i.e., at {\bf spacial border basis schemes}.
The first case is $\OO = \{1,x,y,z\}$, and it 
is still relatively straightforward, as the following example shows.

\begin{example}{\bf (The $\{1,x,y,z\}$-Border Basis Scheme)}\label{ex-1xyz}$\mathstrut$\\
In the polynomial ring $P=K[x,y,z]$ we consider the order ideal $\OO= \{ t_1, t_2, t_3, t_4\}$,
where $t_1=1$, $t_2=z$, $t_3=y$, and $t_4=x$. Its border is $\partial\OO = \{ b_1, \dots, b_6\}$,
where $b_1 = z^2$, $b_2= yz$, $b_3=xz$, $b_4=y^2$, $b_5=xy$, and $b_6=x^2$. (For convenience,
we have ordered $\OO$ and $\partial\OO$ increasingly w.r.t.\ {\tt DegRevLex}.)

Here the total arrow degrees of the indeterminates $c_{ij}$ are $\deg_W(c_{ij}) = 1$
if $i\ge 2$ and $\deg_W(c_{ij})=2$ for $i=1$ and $j=1,\dots,6$. Therefore $K[C]$ is positively graded,
and the canonical morphism $\Theta:\; \Spec(B_\OO) \longrightarrow \AA^0_K = \{0\}$
has only one fiber, namely the entire scheme $\BO$.

The point $p=(0,\dots,0) \in \BO$, also called the {\bf monomial point} of~$\BO$, is
singular (see~\cite[Ex. 5.8]{KR6}). Hence we have $\Sing_0(\Theta) = \Sing_v(\Theta) = \{0\}$.
To check that $\Sing_s(\Theta) = \{0\}$, it suffices to verify that the point
$$
(c_{11}, \dots, c_{46}) \;=\; (-1,-1,-1,-1,-1,-1, 2,1,1,0,0,0, 0,1,0,2,1,0, 0,0,1,0,1,2)
$$
is a singular point of~$\BO$.
\end{example}

The next case is $n=3$ and $\mu=5$. Here we have essentially two significantly different
subcases. Besides $1,x,y,z$, the order ideal~$\OO$ contains either the square of an indeterminate
or the product of two indeterminates. Both cases are quite subtle and challenging.
We study them carefully in the next two examples.

\begin{example}{\bf (The $\{1,x,y,z,z^2\}$-Border Basis Scheme})\label{ex-13z2}$\mathstrut$\\
Let $K$ be a perfect field, let $P = K[x,y,z]$, and consider the order ideal
$\OO = \{t_1,\dots,t_5\}$, where $t_1=1$, $t_2=z$, $t_3=y$, $t_4=x$, and $t_5=z^2$. 
Its border is given by $\partial\OO = \{b_1,\dots,b_8\}$, where $b_1=yz$, $b_2=xz$, $b_3=y^2$,
$b_4=xy$, $b_5=x^2$, $b_6=z^3$, $b_7=yz^2$, and $b_8=xz^2$.
Hence we have $\mu=\#(\OO)=5$ and  $\nu=\#(\partial\OO) = 8$ here.

Clearly, $\BO$ is a MaxDeg border basis scheme embedded in $\AA^{40}_K$.
Its coordinate ring $B_\OO = K[c_{11}, c_{12}, \dots, c_{58}] / I(\BO)$
is non-negatively graded by
\begin{align*}
W &\;=\; (2,  2,  2,  2,  2,  3,  3,  3,  1,  1,  1,  1,  1,  2,  2,  2,  1,  1,  1,  1,\\  
&\qquad\; 1,  2,  2,  2,  1,  1,  1,  1,  1,  2,  2,  2,  0,  0,  0,  0,  0,  1,  1,  1)
\end{align*}
For $A = K[C_0] = K[c_{51}, c_{52}, c_{53}, c_{54}, c_{55}]$, the ring $B_\OO$
is a positive $A$-algebra.
The ideal $I(\BO)$ is generated by~60 $W$-homogeneous polynomials.
According to Remark~\ref{rem-MaxDegBBS}, the scheme  $\BO$ is irreducible. 
Thus it coincides with its principal component, and therefore 
its dimension is $\dim(B_\OO) = 3 \cdot 5 = 15$.

\medskip
{\it (1) A $Z$-Separating Re-Embedding.}
In order to study the canonical morphism $\Theta:\; \BO \longrightarrow \AA^5_K$, its fibers
and its sets of singularities, we can perform a $Z$-separating re-embedding of~$\BO$,
because $I(\BO)\cap A = \{0\}$ shows that the resulting isomorphism fixes the base space
$\AA^5_K = \Spec(A)$ and induces isomorphisms of the fibers and singular loci.

Using~\cite{KR5}, Algorithm 4.2, we calculate the tuple 
$$
Z \;=\; (c_{11},  c_{12},  c_{13},  c_{14},  c_{15}, c_{16},  c_{17},  c_{18},   c_{21},  c_{22},  
c_{23},  c_{24},  c_{25}, c_{27},  c_{28},  c_{37},  c_{38},  c_{47},  c_{48})
$$
of~19 indeterminates which has the property that the ideal $I(\BO)$ is $Z$-separating. 
Hence there exists an epimorphism $\phi: K[C] \longrightarrow K[Y] $, where $Y=C\setminus Z$ 
is a set of 21 indeterminates, such that~$\phi$ induces an isomorphism of $A$-algebras 
$\Phi: K[C]/I(\BO) \cong K[Y] / (I(\BO)\cap K[Y])$. 

Next we let $\M=\langle c_{ij}\rangle$ and compute $\dim_K(\Lin_\M(I(\BO))) =19$. 
By~\cite{KLR1}, Corollary 4.2, it follows that $\Phi$ is an optimal re-embedding.  
Moreover, together with $\dim(\BO) = 15$, we obtain that the origin is a singular
point of~$\BO$.

Consequently, we can now apply the isomorphism~$\Phi$
and continue our investigation by examining the positive $A$-algebra $R = K[Y]/J$, 
where  $J = I(\BO)\cap K[Y]$. Notice that the set of indeterminates is
\begin{align*}
Y &\;=\; \{ c_{26},  c_{31},  c_{32},  c_{33},  c_{34},  c_{35},  c_{36},  c_{41},  c_{42}, c_{43},  c_{44},\\ 
&\qquad\; c_{45},  c_{46},  c_{51},  c_{52},  c_{53},  c_{54},  c_{55},  c_{56},  c_{57},  c_{58} \}
\end{align*}
and that their degrees are given by 
$$
W' \;=\; (2,  1,  1,  1,  1,  1,  2,  1,  1,  1,  1,  1,  2,  0,  0,  0,  0,  0,  1,  1,  1)
$$
We have $Y=C_0 \cup C_+$ with $C_0 = \{ c_{51},  c_{52},  c_{53},  c_{54},  c_{55}\}$
and 
$$
C_+ \;=\; \{ c_{26},  c_{31},  c_{32},  c_{33},  c_{34},  c_{35},  c_{36},  c_{41},  c_{42}, 
c_{43},  c_{44},  c_{45},  c_{46},   c_{56},  c_{57},  c_{58} \}
$$

\medskip
{\it (2) The Generic Fiber.} Let $L = Q(A) = K(c_{51},  c_{52},  c_{53},  c_{54},  c_{55})$,
and let $R_L= L[C_+] / J\,L[C_+]$ be the homogeneous coordinate ring 
of the generic fiber of~$\Theta$ (see Definition~\ref{def-GenFib}). 
A straightforward computation shows that $\Spec(R_L)$ is isomorphic 
to an affine space of dimension~10. 

\medskip
{\it (3) The $A$-Linear Coefficient Matrix.} To go on, we first need a better system 
of generators of~$J$. The isomorphism $\Phi$ provides a tuple of $W'$-homogeneous generators 
of~$J$. Using a suitable truncated Gr\"obner basis, we can extract a subtuple~$F$ 
which consists of 15 polynomials and is a minimal $W'$-homogeneous set of generators of~$J$.

According to Remark~\ref{rem-blockmat}, the $A$-linear coefficients matrix $\LL_A(F)$
can be arranged as a block matrix 
$$
\LL_A(F) \;=\; \begin{pmatrix} 
\LL_A^{(1)}(F) & 0 \\ 0 & \LL_A^{(2)}(F)  
\end{pmatrix}
$$
with the following two blocks:
$$
\LL_A^{(1)}(F)\tr =
\left( \begin{smallmatrix}
0  & -c_{51}c_{52} +c_{54}  & -c_{51}^2c_{52} +c_{52}c_{53} \\
c_{52}c_{53} -c_{51}c_{54} & c_{51}^2 -c_{53}  & c_{51}^3 -c_{51}c_{53} \\
0  & 0  & -c_{51}c_{52} +c_{54} \\
-c_{51}c_{52} +c_{54}  & 0  & c_{51}^2 -c_{53} \\
c_{51}^2 -c_{53}  & 0  & 0 \\
0  & -c_{52}^2 +c_{55}  & -c_{51}c_{52}^2 +c_{52}c_{54} \\
c_{52}c_{54} -c_{51}c_{55}  & c_{51}c_{52} -c_{54}  & c_{51}^2c_{52} -c_{51}c_{54} \\
0  & 0  & -c_{52}^2 +c_{55} \\
-c_{52}^2 +c_{55}  & 0  & c_{51}c_{52} -c_{54} \\
c_{51}c_{52} -c_{54}  & 0  & 0 \\
-c_{52}c_{54} +c_{51}c_{55}  & 0  & -c_{52}c_{53} +c_{51}c_{54} \\
c_{52}^2 -c_{55}  & 0  & c_{51}c_{52} -c_{54} \\
-c_{51}c_{52} +c_{54}  & 0  & -c_{51}^2 +c_{53} 
\end{smallmatrix} \right)
$$

$$ 
\LL_A^{(2)}(F) =
\left( \begin{smallmatrix}
0  & 0  & c_{52}^2 {-}c_{55}\\ 
0  & c_{52}^2 {-}c_{55}  & 0\\ 
0  & 0  & {-}c_{51}c_{52} {+}c_{54}\\
0  & {-}c_{51}c_{52} {+}c_{54}  & 0\\
0  & 0  & c_{51}^2 {-}c_{53}\\
0  & {-}c_{51}^2 {+}c_{53}  & 0\\
{-}c_{52}^2 {+}c_{55}  & 0  & c_{52}^3 {-}c_{52}c_{55}\\
c_{51}c_{52} {-}c_{54}  & c_{52}c_{53} {-}c_{51}c_{54}  & {-}c_{51}c_{52}^2 {+}c_{52}c_{54}\\
c_{51}^2 {-}c_{53}  & 0  & {-}c_{51}^2c_{52} {+}c_{52}c_{53}
\end{smallmatrix} \right)
$$
Note that there are 13 indeterminates of $W'$-degree one in~$Y$ and three $W'$-ho\-mo\-ge\-neous
polynomials of $W'$-degree one in~$F$ whose $A$-linear parts
are in the rows of $\LL_A^{(1)}(F)$. Similarly, there are three indeterminates of $W'$-degree two in~$Y$
and nine $W'$-homogeneous polynomials of $W'$-degree two in~$F$ whose $A$-linear parts are in
the rows of $\LL_A^{(2)}(F)$. 
As the origin of the generic fiber is smooth (see~(2)), we get $\rk(\LL_A(F)) = 16 - 10 = 6$
by Remark~\ref{rem-blockmat}.c.

\medskip
{\it (4) The Set of Zero Section Singularities.}
By Theorem~\ref{thm-CharSing0}.e,  we have
$\Sing_0(\Theta) = \mathcal{Z}(J_6)$, where $J_6$ is the ideal generated by the 
minors of order~6 of $\LL_A(F)$.

In view of the block structure of~$\LL_A(F)$, we let 
$J^{(1)}_3$ be  the ideal generated by the minors of order~3 of $\LL_A^{(1)}(F)$, 
we let $J^{(2)}_3$ be the ideal generated by the minors or order~3 of $\LL^{(2)}_A(F)$, and
we get $J_6 = J^{(1)}_3 \cdot J^{(2)}_3$. This implies
$$
\mathcal{Z}(J_6) \;=\; \mathcal{Z}(J^{(1)}_3) \cup \mathcal{Z}(J^{(2)}_3)
$$
When we compute $J^{(1)}_3$, we see that five of its generators are cubes.
Replacing them by their cube roots yields the ideal
$$
\mathfrak{p} \;=\; \langle c_{53} -c_{51}^2, \  c_{54} -c_{51}c_{52}, \ c_{55}  -c_{52}^2\rangle
\;=\; \Rad(J^{(1)}_3)
$$
where the second equality follows from the observation that $\mathfrak{p}$ is clearly 
a prime ideal. For the ideal $J^{(2)}_3$, a similar computation yields the surprising
result that $\Rad(J^{(2)}_3) = \mathfrak{p}$ again.

Altogether, we obtain $\Sing_0(\Theta) = \mathcal{Z}(\mathfrak{p})$, and
using the $(c_{53},c_{54},c_{55})$-separating reembedding, we see that $\Sing_0(\Theta)$
is isomorphic to an affine plane.

\medskip
{\it (5) The Set of Vertex Singularities.}
For every zero $\Gamma = (\gamma_1, \gamma_2, \gamma_1^2, \gamma_1\gamma_2, \gamma_2^2)
\in K^5$ of~$\mathfrak{p}$, the matrix $\LL_K(F_\Gamma)$ turns out to
be the zero matrix.
According to Proposition~\ref{thm-CharSing_v}.c, every point
$\Gamma \in \Sing_0(\Theta) \setminus \Sing_v(\Theta)$ has to satisfy 
$0 = \rk(\LL_K(F_\Gamma)) = n-m-d_{F_\Gamma} = 16- d_{F_\Gamma}$. 
As $\dim(\BO)=15$, this is impossible. Thus we conclude that 
$\Sing_v(\Theta) =\Sing_0(\Theta)$ here.

\medskip
{\it (6) The Generic Fiber over $\Sing_0(\Theta)$.}
Next we restrict the morphism $\Theta$ to $\Sing_0(\Theta)$ and
apply the $(c_{53},c_{54},c_{55})$-separating reembedding to the
restricted family. We obtain a positive $A' = K[c_{51},c_{52}]$-algebra
$R' = A'[C_+]/I'$ where the ideal~$I'$ is generated by 12 $W$-homogeneous polynomials.

If we let $Q=K(c_{51},c_{52})$, the generic fiber of~$\Theta$
over $\Sing_0(\Theta)$ has the homogeneous coordinate ring
$R'_Q= Q[C_+]/I'\,Q[C_+]$. Its dimension turns out to be $\dim(R'_Q)=11$
which is one more than the dimension of the generic fiber of~$\Theta$,
namely $\dim(R_L)=10$.

\medskip
{\it (7) The Set of Singular Fibers.}
The set $\Sing_s(\Theta)$ corresponds to the set of all points 
$\Gamma' = (\gamma_1,\gamma_2) \in \AA^2_K$ such that the singular locus of the fiber $R'_{F_{\Gamma'}}$
contains a point which is different from its vertex.

A direct computation shows that, for all $(\delta_1,\dots,\delta_6)\in K^6\setminus \{(0,\dots,0)\}$,
the point whose coordinates are 
\begin{align*}
(&c_{26}, c_{31}, &&c_{32},  c_{33}, &&c_{34}, c_{35}, &&c_{36}, c_{41}, &&c_{42}, c_{43}, &&c_{44}, 
c_{45}, &&c_{46}, c_{56}, &&c_{57}, c_{58}) \\
\;=\; (&\;\delta_1,\;\delta_4, &&\; \;0, \;2\delta_5, &&\;\delta_6, 
         \;0, &&\;\delta_2, \;0, &&\;\delta_4, \;0, &&\;\delta_5, \;2\delta_6, &&\;\delta_3, \;\delta_4, &&\;\delta_5,\;\delta_6)
\end{align*}
is a singular point of the fiber $F_{\Gamma'}$ which is different from its vertex.
Therefore we conclude that $\Sing_s(\Theta) = \Sing_0(\Theta) = \mathcal{Z}(\mathfrak{p})$.

\end{example}

In the second subcase for $n=3$ and $\mu=5$, namely for $\OO = \{1,x,y,z,yz\}$, 
a somewhat different picture emerges.

\begin{example}{\bf (The $\{1,x,y,z,yz\}$-Border Basis Scheme)}\label{ex-13yz}$\mathstrut$\\
Let $K$ be a perfect field, let $P = K[x,y,z]$, and consider the order ideal
$\OO = \{t_1,\dots,t_5\}$, where $t_1=1$, $t_2=z$, $t_3=y$, $t_4=x$, and $t_5=yz$. 
Its border is given by $\partial\OO = \{b_1,\dots,b_8\}$, where $b_1=z^2$, $b_2=yz$, $b_3=xz$,
$b_4=y^2$, $b_5=x^2$, $b_6=xyz$, $b_7=xy^2$, and $b_8=x^2y$.
Hence we have $\mu=\#(\OO)=5$ and  $\nu=\#(\delta\OO=8$ here.

Clearly, $\BO$ is a MaxDeg border basis scheme embedded in $\AA^{40}_K$.
Its coordinate ring $B_\OO = K[c_{11}, c_{12}, \dots, c_{58}] / I(\BO)$
is non-negatively graded by
\begin{align*}
W &\;=\; (2,  2,  2,  2,  2,  3,  3,  3,     1,  1,  1,  1,  1,  2,  2,  2,    1,  1,  1,  1,\\  
&\qquad\; 1,  2,  2,  2,     1,  1,  1,  1,  1,  2,  2,  2,     0,  0,  0,  0,  0,  1,  1,  1)
\end{align*}
For $A = K[C_0] = K[c_{51}, c_{52}, c_{53}, c_{54}, c_{55}]$, the ring $B_\OO$
is a positive $A$-algebra.
The ideal $I(\BO)$ is generated by~60 $W$-homogeneous polynomials.
According to Remark~\ref{rem-MaxDegBBS}, the scheme  $\BO$ is irreducible. 
Thus it coincides with its principal component, and therefore 
its dimension is $\dim(B_\OO) = 3 \cdot 5 = 15$.

\medskip
{\it (1) A $Z$-Separating Re-Embedding.}
In order to study the canonical morphism $\Theta:\; \BO \longrightarrow \AA^5_K$, its fibers
and its sets of singularities, we can perform a $Z$-separating re-embedding of~$\BO$,
because $I(\BO)\cap A = \{0\}$ shows that the resulting isomorphism fixes the base space
$\AA^5_K = \Spec(A)$ and induces isomorphisms of the fibers and singular loci.

Using~\cite{KR5}, Algorithm 4.2, and calculate the tuple 
$$
Z \;=\; (c_{11},  c_{12},  c_{13},  c_{14},  c_{15}, c_{16},  c_{17},  c_{18},   c_{21},  c_{22},  
c_{24},  c_{25}, c_{26}, c_{28},  c_{33}, c_{36},  c_{38},  c_{46},  c_{48})
$$
of~19 indeterminates which has the property that the ideal $I(\BO)$ is $Z$-separating. 
Hence there exists an epimorphism $\phi: K[C] \longrightarrow K[Y] $, where $Y=C\setminus Z$ 
is a set of 21 indeterminates, such that~$\phi$ induces an isomorphism of $A$-algebras 
$\Phi: K[C]/I(\BO) \cong K[Y] / (I(\BO)\cap K[Y])$. 

Next we let $\M=\langle c_{ij}\rangle$ and compute $\dim_K(\Lin_\M(I(\BO))) = 15$. 
By~\cite{KLR1}, Corollary 4.2, it follows that $\Phi$ is an optimal re-embedding.  
Moreover, together with $\dim(\BO) = 15$, we obtain that the origin is a {\it regular}
point of~$\BO$ in stark contrast to the preceding example.

Consequently, we can now apply the isomorphism~$\Phi$
and continue our investigation by examining the positive $A$-algebra $R = K[Y]/J$, 
where  $J = I(\BO)\cap K[Y]$. Notice that the set of indeterminates~$Y$ satisfies
$Y=C_0 \cup C_+$ with $C_0 = \{ c_{51},  c_{52},  c_{53},  c_{54},  c_{55}\}$
and 
$$
C_+ \;=\; \{ c_{23},  c_{27},  c_{31},  c_{32},  c_{34},  c_{35},  c_{37},  c_{41},  c_{42}, 
c_{43},  c_{44},  c_{45},  c_{46},   c_{56},  c_{57},  c_{58} \}
$$
The degrees of the indeterminates in~$C_+$ are given by
$$
W' \;=\; ( 1, 2, 1, 1, 1, 1, 2, 1, 1, 1, 1, 1, 2, 1, 1, 1 )
$$

\medskip
{\it (2) The $A$-Linear Coefficient Matrix.} 
As in the preceding example, we now compute a better system 
of generators of~$J$. The isomorphism $\Phi$ provides a tuple of $W'$-homogeneous generators 
of~$J$. Using a suitable truncated Gr\"obner basis, we can extract a subtuple~$F$ 
which consists of 15 polynomials and is a minimal $W'$-homogeneous set of generators of~$J$.

According to Remark~\ref{rem-blockmat}, the $A$-linear coefficients matrix $\LL_A(F)$
can be arranged as a block matrix with the two blocks.
Since there are 13 indeterminates of $W'$-degree one in~$Y$ and three $W'$-ho\-mo\-ge\-neous
polynomials of $W'$-degree one in~$F$ whose $A$-linear parts
are in the rows of $\LL_A^{(1)}(F)$, this matrix has size $13\times 3$.
Similarly, since there are three indeterminates of $W'$-degree two in~$Y$
and nine $W'$-homogeneous polynomials of $W'$-degree two in~$F$ whose $A$-linear parts are in
the rows of $\LL_A^{(2)}(F)$, this matrix has size $3\times 9$.
Altogether, the matrix $\LL_A(F)$ has the form
$$
\LL_A(F) \;=\; \begin{pmatrix} 
\LL_A^{(1)}(F) & 0 \\ 0 & \LL_A^{(2)}(F)  
\end{pmatrix}
$$
and we get $\rk(\LL_A(G)) = 16-10= 6$  by Remark~\ref{rem-blockmat}.c.

\medskip
{\it (3) The Set of Zero Section Singularities.}
The computation proceeds exactly as in the preceding example.
Once again, the ideals generated by the minors of order~3 of the two blocks
have the same radical
$$
\mathfrak{p} \;=\; \langle c_{52} -c_{51}c_{54},\;  c_{55} -c_{51}c_{54}^2,\; 
c_{51}c_{53} -1 \rangle
$$
so that $\Sing_0(\Theta) = \mathcal{Z}(\mathfrak{p})$ is an irreducible
variety in $\AA^5_K$. More precisely, $\Sing_0(\Theta)$ is isomorphic 
to an open set of an affine plane
in $\Spec(A)= \mathbb{A}_K^5$. As in Example~\ref{ex-13z2}, the generic fiber 
over $\Sing_0(\Theta)$ has dimension 11,
while the generic fiber of $\Theta$ has dimension 10.

\medskip
{\it (4) The Set of Vertex Singularities.}
For every zero $\Gamma\in K^5$ of~$\mathfrak{p}$, the matrix $\LL_K(F_\Gamma)$ 
turns out to be the zero matrix. With the same proof as in the preceding example,
it follows that $\Sing_v(\Theta) =\Sing_0(\Theta)$.

\medskip
{\it (5) The Set of Singular Fibers}
Let $\Gamma = (\gamma_1, \gamma_1 \gamma_2, \tfrac{1}{\gamma_1}, \gamma_2, \gamma_1 \gamma_2^2)$
be a point in $\Sing_0(\Theta)$, where $\gamma_1,\gamma_2\in K$ and $\gamma_1\ne 0$.
A direct computation shows that, for all $(\delta_1,\delta_2,\delta_3)\in K^3\setminus \{(0,0,0)\}$,
the point whose coordinates are 
\begin{align*}
(&c_{23}, c_{27}, &&c_{31}, c_{32}, &&c_{34}, c_{35}, &&c_{37}, c_{41}, &&c_{42}, 
c_{43}, &&c_{44}, c_{45}, &&c_{46}, c_{56}, &&c_{57}, c_{58}) \\
\;=\; (&\;0, \;\;\delta_1, &&\;\;0,\;\;0, &&\;\;0, \;\;0, &&\;\delta_2,\;\;0,  &&\;\;0,\;\;0,  
&&\;\;0, \;\;0, &&\;\delta_3,\;\;0, &&\;\;0, \;\;0)
\end{align*}
is a singular point of the fiber $F_\Gamma$ which is different from its vertex.
Therefore we conclude that $\Sing_s(\Theta) = \Sing_0(\Theta) = \mathcal{Z}(\mathfrak{p})$.

\end{example}

\bigbreak
%%%%%%%%%%%%%%%%%%%%%%%%%%%%%%%%%%%%%%%%%%%%%%
%
% Section 8: Conclusion and Outlook
%
%%%%%%%%%%%%%%%%%%%%%%%%%%%%%%%%%%%%%%%%%%%%%%

\section{Conclusion and Outlook}
\label{sec-conclusion}

In this paper we continued the study of non-negatively graded algebras~$R$ whose
degree zero component~$A$ is a polynomial ring over a field. Using the corresponding
morphism $\Theta:\; \Spec(R) \longrightarrow \Spec(A) \cong \AA^m_K$, we showed 
that $\Spec(R)$ is connected and that its fibers are affine spaces over an open subset 
of~$\AA^m_K$. Then the main emphasis was to consider various loci in~$\AA^m_K$ 
corresponding to certain types of singularities.

More precisely, we looked at $\Sing_0(\Theta)$, the set of points corresponding to singularities
of $\Spec(R)$ on the zero section, at $\Sing_v(\Theta)$, the set of points for which the fiber
has a singularity at its origin, and at $\Sing_s(\Theta)$, the set of points for which the fiber
is singular as a weighted projective scheme. We found characterizations of those loci in terms
of the ranks of an $A$-linear coefficient matrix and turned these characterizations into
algorithms for computing the loci.

All of these results were motivated by the wish to examine MaxDeg border basis schemes
and the difficulty posed by their high embedding dimensions, even after performing
the best possible $Z$-separating reembeddings. Motivated by this application scenario,
many natural questions arise, some of which could also be studied in the general context 
of positive $A$-algebras:

\begin{enumerate}
\item[(1)] Are MaxDeg border basis schemes always reduced? When are they irreducible?
(For large order ideals, some MaxDeg border basis schemes are known to be reducible, 
see~\cite{KR3},\cite{Iar}.)

\item[(2)] What can one say about the irreducible components of the fibers of~$\Theta$?

\item[(3)] What is the dimension and structure of the singular locus of the fibers~$F_\Gamma$
with $\Gamma \in \Sing_s(\Theta)$?

\item[(4)] In the setting of MaxDeg border basis schemes, consider the points in the zero section 
of~$\Theta$ corresponding to $\Sing_0(\Theta)$ or $\Sing_v(\Theta)$. Can the 0-dimensional schemes 
they represent be characterized geometrically?
The same question arises for the singular points of the fibers over $\Sing_s(\Theta)$.

\end{enumerate}

\bigbreak
%%%%%%%%%%%%%%%%%%%%%%%%%%%%%%%%%%%%%%
%
%   Bibliography
%
%%%%%%%%%%%%%%%%%%%%%%%%%%%%%%%%%%%%%%


\begin{thebibliography}{99}

\bibitem{ABR} J.\ Abbott, A.M.\ Bigatti, and L.\ Robbiano,
CoCoA: a system for doing Computations in Commutative Algebra, 2019,
Available at https://sites.google.com/view/cocoa-cocoalib

\bibitem{BR} M.\ Beltrametti  and L.\  Robbiano, Introduction to the
theory of weighted projective spaces, Expo. Math. {\bf 4} (1986), 111--162


\bibitem{DJNT} T.\ Douvropoulos, J.\ Jelisiejew, B.I.U.\ N{\o}dland, Z.\ Teitler,
The Hilbert scheme of 11 points is irreducible, in: G.\ Smith, B.\ Sturmfels (eds), 
{\it Combinatorial Algebraic Geometry}, Fields Institute Communications {\bf 80},
Springer, New York 2017, \url{https://doi.org/10.1007/978-1-4939-7486-3_15}


%\bibitem{Gro} A.\ Grothendieck, Techniques de construction et th\'{e}or\`{e}mes 
%d'existence en g\'{e}om\'{e}trie al\-g\'e\-bri\-que. IV. Les sch\'emas de Hilbert, 
%Seminaire Bourbaki {\bf 221} (1961). Reprinted in: A.\ Douady et al. (eds), 
%Seminaire Bourbaki, Vol. 6, Soc.\ Math.\ France, Paris 1995, pp.\ 249--276.


\bibitem{Har} R.\ Hartshorne, Connectedness of the Hilbert scheme, 
Publ.\ Math.\ IHES {\bf 29} (1966), 7–-48, \url{https://doi.org/10.1007/BF02684803}


%\bibitem{Hui1} M.\ Huibregtse, A description of certain affine open schemes
%that form an open covering of ${\rm Hilb}_{\AA_2^k}^n$,
%Pacific J.\ Math.\ {\bf 204} (2002), 97--143.


%\bibitem{Hui2} M.\ Huibregtse, The cotangent space at a monomial ideal
%of the Hilbert scheme of points of an affine space, preprint 2005,
%available at {\tt arxiv:math/0506575 [math.AG]}.


%\bibitem{Hui3} M.\ Huibregtse, An elementary construction of the multigraded 
%Hilbert scheme of points, Pacific J.\ Math.\ {\bf 223} (2006), 269--315.


\bibitem{Iar} A.\ Iarrobino, Reducibility of the families of 0-dimensional
schemes on a variety,\  Invent.\ Math.\ {\bf 15} (1972), 72--77.


%\bibitem{KK} M.\ Kreuzer and A.\ Kehrein, Characterizations of border bases,
%J.\ Pure Appl.\ Algebra {\bf 196} (2005), 251--270. 

%\bibitem{KKr} M.\ Kreuzer and M.\ Kriegl, Gr\"obner bases for syzygy modules of border bases,
%J.\ Algebra Appl.\ {\bf 13} (2014), article 1450003


\bibitem{KLR0} M.\ Kreuzer, L.N.\ Long, and L.\ Robbiano, 
Computing subschemes of the border basis scheme, 
Int. J. Algebra Comput. {\bf 30} (2020), 1671--1716.

\bibitem{KLR1} M.\ Kreuzer, L.N.\ Long, and L.\ Robbiano, 
Cotangent spaces and separating re-embeddings, J.\ Algebra Appl. {\bf 21} (2022), 
\url{https://doi.org/10.1142/S0219498822501882}

\bibitem{KLR2} M.\ Kreuzer, L.N.\ Long, and L.\ Robbiano, 
Restricted Gr\"obner fans and re-embeddings of affine algebras, 
S\~ao Paulo J.\ Math.\ Sci.\ 2022, 26 pages,
https://doi.org/10.1007/s40863-022-00324-w

\bibitem{KLR3} M.\ Kreuzer, L.N.\ Long, and L.\ Robbiano, 
Re-embeddings of affine algebras via Gr{\"o}bner fans of linear ideals, 
Beitr.\ Algebra Geom.\ (2024),
\url{https://doi.org/10.1007/s13366-024-00733-2}



\bibitem{KR1} M.\ Kreuzer and L.\ Robbiano, {\it Computational
Commutative  Algebra 1}, Springer-Verlag, Berlin Heidelberg, 2000.

\bibitem{KR2} M.\ Kreuzer and L.\ Robbiano, {\it Computational
Commutative Algebra 2}, Springer-Verlag, Berlin Heidelberg, 2005.

\bibitem{KR3} M.\ Kreuzer and L.\ Robbiano, Deformations of border bases,
Collect.\ Math.\ {\bf 59} (2008), 275--297.

\bibitem{KR4} M.\ Kreuzer and L.\ Robbiano, The geometry of border bases,
J.\ Pure Appl.\ Algebra {\bf 215} (2011), 2005--2018.

\bibitem{KR5} M.\ Kreuzer and L.\ Robbiano, Elimination by substitution,
J. Symbolic Comput. {\bf 131} (2025) 102445\, \  \url{https://doi.org/10.1016/j.jsc.2025.102445}

\bibitem{KR6} M.\ Kreuzer and L.\ Robbiano, Re-Embeddings of Special Border Basis Schemes,
In J.\ Brennan, A.\ Simis (eds), {\it Commutative Algebra,
The Mathematical Legacy of Wolmer V. Vasconcelos},
De Gruyter, pp 353--388 (2025),
\url{ https://doi.org/10.1515/9783110999365-012}


%\bibitem{KSL} M.\ Kreuzer, B.\ Sipal, and L.N.Long,
%On the regularity of the monomial point of a border basis scheme,
%Beitr. Algebra Geom. {\bf 61} (2020), 515--532.

\bibitem{Kun} E.\ Kunz, {\it Introduction to Commutative Algebra and Algebraic Geometry},
Birkh\"auser, Boston 1985.

\bibitem{Maz} G.\ Mazzola, Generic finite schemes and Hochschild cocycles, 
Comment. Math. Helv. {\bf 55} (1980), 267--293.

%\bibitem{Ma} H.\ Matsumura, {\it Commutative Algebra}, second edition,
%Addison-Wesley, 1989.

%\bibitem{MS} E.\ Miller and B.\ Sturmfels, {\it Combinatorial Commutative Algebra}, Springer, New York 2005.

\bibitem{PS} I.\ Peeva and M.\ Stillman, Connectedness of Hilbert schemes, J.\ Algebraic Geom. {\bf 14} 
(2005), 193--211, \url{https://doi.org/10.1090/S1056-3911-04-00386-8}

\bibitem{Rob} L.\ Robbiano, On border basis and Gr\"obner basis schemes,
Collect.\ Math.\ {\bf 60} (2009), 11--25.

\bibitem{Wei} V.\ Weispfenning, Comprehensive Gr\"obner bases, J.\ Symb.\ Comput.\ {\bf 14} (1992), 1--29.

\end{thebibliography}
\end{document}